\newcommand{\TheTitle}{Adaptive timestepping strategies for nonlinear stochastic systems} 
\newcommand{\TheShortTitle}{Adaptive timestepping for SDEs}
\newcommand{\TheAuthors}{C\'onall Kelly and Gabriel J. Lord}
\headers{\TheShortTitle}{\TheAuthors}
\title{{\TheTitle}\thanks{Submitted to the editors 
October 12, 2016.}}
\author{C\'onall Kelly
  \thanks{{Department of Mathematics, The University of the West Indies, Mona, Kingston 7, Jamaica.}
    (\email{conall.kelly@uwimona.edu.jm}, \url{}), \funding{Supported by a SQuaRE activity entitled ``Stochastic stabilisation of limit-cycle dynamics in ecology and neuroscience'' funded by the American Institute of Mathematics.}.}
  \and
  Gabriel J. Lord \thanks{Maxwell Institute, Department of
    Mathematics, MACS, Heriot-Watt University, Edinburgh, EH14 4AS,
    UK.(\email{g.j.lord@hw.ac.uk}, \url{www.macs.hw.ac.uk/\~gabriel}).}
}
\newtheorem{assumption}[theorem]{Assumption}
\newtheorem{remark}[theorem]{Remark}
\newcommand{\real}{\mathbb{R}}
\newcommand{\hmin}{h_{\min}}
\newcommand{\hmax}{h_{\max}}
\newcommand{\hmean}{h_{\text{mean}}}
\newcommand{\intdt}{\int_{t_n}^{t_{n+1}}}
\newcommand{\expect}[1]{\mathbb{E}\left[  {#1} \right]}
\newcommand{\EAT}{AT\xspace}
\newcommand{\EALD}{ALD\xspace}
\newcommand{\EFT}{FT\xspace}
\newcommand{\EFLD}{FLD\xspace}
\newcommand{\secref}[1]{Section \ref{#1}\xspace}
\newcommand{\Ito}{It\^o\xspace}
\begin{document}

\maketitle

\begin{abstract}
We introduce a class of adaptive timestepping strategies for
stochastic differential equations with non-Lipschitz drift
coefficients. These strategies work by controlling potential unbounded
growth in  solutions of a numerical scheme due to the drift. We prove
that the Euler-Maruyama scheme with an adaptive timestepping strategy
in this class is strongly convergent. Specific strategies falling into
this class are presented and demonstrated on a selection of numerical
test problems. We observe that this approach is broadly applicable, can provide more
dynamically accurate solutions than a drift-tamed scheme with fixed
stepsize, and can improve MLMC simulations.
\end{abstract}

\begin{keywords}
Stochastic differential equations, Adaptive timestepping, Euler-Maruyama method, Locally Lipschitz
drift coefficient, Strong convergence.
\end{keywords}

\begin{AMS}
65C20, 65C30, 65L20, 65L50
\end{AMS}

\date{\today}

\section{Introduction}
We investigate adaptive timestepping for the numerical approximation of a $d$-dimensional stochastic differential equation (SDE) of \Ito type
\begin{align}\label{eq:SDE}
  dX(t)&=f(X(t))dt+g(X(t))dW(t),\quad t>0,\\
  X(0)&\in\mathbb{R}^d,\nonumber
\end{align}
where $W$ is an $m$-dimensional Wiener process and the drift coefficient $f$ is not globally Lipschitz continuous, but rather satisfies a one-sided Lipschitz condition and a polynomial growth condition.

Since it was pointed out in \cite{HJ2011} that the Euler-Maruyama
method fails to converge in the strong sense for such equations, there
has been much interest in tamed numerical methods, the first of which
was presented in \cite{HJK2012} (see \cref{eq:TE} in Section \ref{sec:MathPreLim}). We also refer the reader to the variant presented in \cite{Sotirios2013}, and to the related class of truncated methods which may be found in, for example, \cite{MR3430145}. Generally, speaking, these methods work by enforcing a higher order modification to the drift and (if necessary) diffusion coefficients in order to control unbounded growth permitted by non-globally Lipschitz coefficients. The idea has been extended to higher order schemes \cite{WangGan2013}, to SDEs with L\`evy noise \cite{MR3513865}, and to stochastic partial differential equations (SPDEs) \cite{MR3498982}. 

However, as noted in \cite{MR3129758}, (fully) tamed methods can lead to dynamically inaccurate
results for even moderately small step-sizes, due at least in part to the perturbation of the flow that results from modifying the coefficients. We illustrate this further in \secref{sec:examples} when we show, for example, that the drift-tamed Euler-Maruyama method does not give a good approximation of the period for the stochastic Van der Pol oscillator.

In this article, we propose an alternative approach to the control of growth arising from a non globally-Lipschitz drift coefficient. Rather than modifying the drift directly, we adjust the length of the timestep taken at each iteration in order to control the norm of the drift response. In spirit this idea is closer to the projected Euler and Milstein methods given in \cite{Beyn2016}, where solutions are prevented from leaving a ball, the radius of which is dependent on the step-size. We also point out \cite{Appleby2010}, where adaptive timestepping was used to control solution dynamics, and in particular to preserve the positivity of solutions of the numerical discretisation of nonlinear SDEs. The recent preprint of Fang \& Giles \cite{Giles2017} takes a related approach; see \cref{rem:FangGiles} in Section
\ref{sec:admiss} for a comparative discussion.

Otherwise, adaptive timestepping for SDEs has tended to concentrate on local error control; see for example \cite{Burrage2004,LMS2006,MR3325826,ShardlowTaylor2015}. A serious drawback of using adaptive methods for SDEs is the potential requirement to interpolate the Brownian path in the case that a timestep is rejected. This is not necessary for the method we propose here, as long as the diffusion coefficient satisfies a global Lipschitz condition. 

The structure of the article is as follows. The remainder of the introduction lays out the mathematical framework for the article, and summarises relevant results from the literature. In \secref{sec:adaptive} we describe the Euler-type discretisation with random stepsize that forms the basis of our scheme. We demonstrate how stepsize controls can be motivated, either by ensuring that the discretised drift coefficient responds similarly to that of a scheme which is known to converge strongly (e.g. tamed Euler), or by examining the dynamics of the discrete drift map. Finally, we define a class of admissible timestepping strategies for \eqref{eq:SDE}, provide examples, and state the strong convergence theorem that is our main result.

In \secref{sec:examples} we investigate our methods with two adaptive timestepping strategies and compare their performance to a tamed Euler method with fixed stepsize for eight test problems, illustrating convergence and reporting the details of stepsizes chosen by each strategy. In particular for the stochastic Van der Pol oscillator we see that the fixed-step tamed Euler method consistently underestimates the period but that adaptive methods give a better approximation. We also examine a multi-level Monte Carlo (MLMC) approximation with adaptive timestepping and observe that this approach reduces the variance on each level, leading to fewer realisations and hence reducing the computational cost.
In \secref{sec:proofs} we provide the proof of our main result. Our conclusions and a short discussion of possible future directions for this work are in \secref{sec:concl}.
  
\subsection{Mathematical preliminaries}
\label{sec:MathPreLim}
Consider the $d$-dimensional \Ito-type SDE \eqref{eq:SDE}. For the remainder of the article we let $(\mathcal{F}_t)_{t\geq 0}$ be the natural filtration of $W$. Suppose $f:\mathbb{R}^d\to\mathbb{R}^d$ is continuously differentiable with derivative that grows at most polynomially: for some $c\in(0,\infty)$
\begin{equation}\label{eq:fDer}
\|Df(x)\|\leq c(1+\|x\|^c);
\end{equation}
and satisfies a one-sided Lipschitz condition with constant $\alpha>0$:
\begin{equation}\label{eq:osLcond}
\langle f(x)-f(y),x-y\rangle\leq \alpha\|x-y\|^2.
\end{equation}
Suppose also that $g:\mathbb{R}^d\to\mathbb{R}^{d\times m}$ is continuously differentiable and satisfies a global Lipschitz condition with constant $\kappa>0$:
\begin{equation}\label{eq:gLcond}
\|g(x)-g(y)\|_F\leq \kappa\|x-y\|.
\end{equation}
Under conditions \eqref{eq:fDer}--\eqref{eq:gLcond}, \eqref{eq:SDE}
has a unique strong solution on any interval $[0,T]$, where $T<\infty$
on the filtered probability space
$(\Omega,\mathcal{F},(\mathcal{F}_t)_{t\geq 0},\mathbb{P})$. Moreover the following moment bounds apply over any finite interval $[0,T]$:
\begin{lemma}\label{lem:boundedMomentsSDE}
Let $f,g$ be $C^1$ functions satisfying \eqref{eq:osLcond} and \eqref{eq:gLcond} respectively. Then for each $p>0$ there is $C=C(p,T,X(0))>0$ such that
\begin{equation}\label{eq:SDEmoments}
\expect{\sup_{s\in[0,T]}\|X(s)\|^p} \leq C.
\end{equation}
\end{lemma}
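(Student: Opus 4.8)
The plan is to prove the estimate first for large even integers $p$ and then to deduce it for all $p>0$ by Jensen's inequality, since $\expect{\sup_{s\in[0,T]}\|X(s)\|^p}\le\big(\expect{\sup_{s\in[0,T]}\|X(s)\|^q}\big)^{p/q}$ whenever $0<p\le q$. Accordingly I fix an even integer $p\ge 2$ and work with the smooth Lyapunov function $V(x)=(1+\|x\|^2)^{p/2}$.

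First I would record the two structural consequences that drive the estimate. Setting $y=0$ in the one-sided Lipschitz condition \eqref{eq:osLcond} and applying Cauchy--Schwarz and Young's inequality gives a one-sided linear bound on the drift response,
\[
\ip{x,f(x)}\le \alpha\|x\|^2+\|f(0)\|\,\|x\|\le K\,(1+\|x\|^2),
\]
while the global Lipschitz condition \eqref{eq:gLcond} gives $\|g(x)\|_F^2\le 2\kappa^2\|x\|^2+2\|g(0)\|_F^2\le K(1+\|x\|^2)$ after enlarging $K$. Only the inner-product bound on $f$ is needed here, not the polynomial derivative bound \eqref{eq:fDer}; the one-sided condition is precisely what prevents the (possibly superlinear) drift from contributing a destabilising superlinear term.

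Next I would apply \ito's formula to $V(X(t))$. The generator $\mathcal{L}V$ decomposes into a drift part built from $\ip{X,f(X)}$ and $\|g(X)\|_F^2$, together with a stochastic-integral martingale term $M_t=\int_0^t p(1+\|X\|^2)^{p/2-1}\,\langle X,g(X)\,dW\rangle$. Using the two bounds above, the drift part of $\mathcal{L}V$ is dominated by $C\,V(X)$. To justify integrability and to annihilate the expectation of the martingale I would localise with the stopping times $\tau_R=\inf\{t\ge 0:\|X(t)\|\ge R\}$, work on $[0,t\wedge\tau_R]$, take expectations, and obtain $\expect{V(X(t\wedge\tau_R))}\le V(X(0))+C\int_0^t\expect{V(X(s\wedge\tau_R))}\,ds$. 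Gronwall's inequality then yields a bound uniform in $R$ and in $t\in[0,T]$, and Fatou's lemma (as $R\to\infty$) gives the preliminary estimate $\sup_{t\in[0,T]}\expect{\|X(t)\|^p}\le C$.

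The main obstacle is upgrading this to the supremum-inside-the-expectation bound. For this I would take the supremum over $t\in[0,T\wedge\tau_R]$ in the \ito expansion \emph{before} taking expectations, and control $\expect{\sup_t|M_t|}$ by the Burkholder--Davis--Gundy inequality. The quadratic variation satisfies $\langle M\rangle_{T}\le C\int_0^{T}V(X(s))^2\,ds$, so BDG together with the elementary bound $(\int_0^T V^2\,ds)^{1/2}\le \sup_{s\le T}V(X(s))^{1/2}(\int_0^T V\,ds)^{1/2}$ and then Young's inequality produce a term $\tfrac14\expect{\sup_{s\le T}V(X(s))}$ plus a multiple of $\expect{\int_0^T V(X(s))\,ds}$. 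Working with the stopped process keeps $\expect{\sup_{t\le T\wedge\tau_R}V(X(t))}$ finite, so the first term may legitimately be absorbed into the left-hand side, while the second is bounded by the preliminary estimate. A final application of Fatou as $R\to\infty$ delivers $\expect{\sup_{t\in[0,T]}\|X(t)\|^p}\le C$. The delicate points are thus the BDG/Young absorption argument and the careful localisation needed to make that absorption rigorous.
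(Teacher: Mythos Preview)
Your proposal is correct and follows the standard route. The paper itself does not give a proof of this lemma: it simply cites Lemma~3.2 of Higham, Mao \& Stuart \cite{HMS2002} for $p>2$ and then invokes Jensen's inequality to cover $0<p\le 2$. Your argument---\Ito's formula applied to $(1+\|x\|^2)^{p/2}$, exploiting only the one-sided linear bound $\ip{x,f(x)}\le K(1+\|x\|^2)$ and the linear growth of $g$, localisation via $\tau_R$, Gronwall for the pointwise moment bound, and then BDG with the split/absorb trick to push the supremum inside the expectation---is precisely the proof given in that reference, and your final reduction to large even $p$ via Jensen matches the paper's remark exactly.
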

This was proved as Lemma 3.2 in \cite{HMS2002} for $p>2$, which can be extended to include $0<p\leq 2$ via Jensen's inequality.

The following bound is used to develop timestepping strategies in Section \ref{sec:admiss}, and in the proof of our main theorem.
\begin{lemma}
  \label{HJK012_FBound}
  The polynomial bound on the derivative of $f$ given by
  \eqref{eq:fDer} implies  
  \begin{equation}\label{eq:fbound}
    \|f(x)\|\leq c_1\left(1+\|x\|^{(c+1)}\right).
  \end{equation}
where $c_1:=2c+\|f(0)\|$.
\end{lemma}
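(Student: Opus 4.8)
The plan is to integrate the derivative bound \eqref{eq:fDer} along the straight line segment joining the origin to $x$. Since $f$ is $C^1$, the fundamental theorem of calculus yields the exact representation
\begin{equation*}
  f(x) = f(0) + \int_0^1 Df(sx)\,x\,ds,
\end{equation*}
so that, taking norms and moving the scalar $\|x\|$ outside the integral (using $\|Df(sx)\,x\|\leq\|Df(sx)\|\,\|x\|$),
\begin{equation*}
  \|f(x)\| \leq \|f(0)\| + \|x\|\int_0^1 \|Df(sx)\|\,ds.
\end{equation*}

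Next I would substitute the polynomial growth bound \eqref{eq:fDer}, using $\|sx\|^c = s^c\|x\|^c$, and evaluate the resulting elementary integral:
\begin{equation*}
  \|x\|\int_0^1 c\left(1 + s^c\|x\|^c\right)ds = c\|x\| + \frac{c}{c+1}\|x\|^{c+1}.
\end{equation*}
It then remains only to collect the three terms $\|f(0)\|$, $c\|x\|$ and $\frac{c}{c+1}\|x\|^{c+1}$ into the stated form.

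The only point requiring care is the bookkeeping that produces exactly the constant $c_1 = 2c + \|f(0)\|$. For the linear term I would use the elementary inequality $\|x\| \leq 1 + \|x\|^{c+1}$, valid for all $x$ since $\|x\|\leq 1$ forces $\|x\|\leq 1$ while $\|x\|\geq 1$ forces $\|x\|\leq \|x\|^{c+1}$; this gives $c\|x\| \leq c + c\|x\|^{c+1}$. For the top-order term I would use $\frac{c}{c+1} \leq c$, which holds because $c>0$. Summing yields $\|f(x)\| \leq \left(\|f(0)\|+c\right) + 2c\|x\|^{c+1}$, and since $2c + \|f(0)\|$ dominates both $\|f(0)\|+c$ and $2c$, the right-hand side is at most $c_1\left(1 + \|x\|^{c+1}\right)$ with $c_1 = 2c + \|f(0)\|$, as claimed.

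I do not anticipate any genuine obstacle here: the argument is a one-line application of the mean value theorem in integral form followed by routine estimates. The only things to watch are that the symbol $c$ plays a dual role (both exponent and prefactor) in \eqref{eq:fDer} so that no relabelling is needed, and that the two elementary inequalities used to merge the lower-order terms do not require the assumption $c\geq 1$.
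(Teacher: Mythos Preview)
Your proof is correct. The paper does not actually prove this lemma but simply cites Lemma~3.1 in \cite{HJK2012}; your argument---integrating the derivative bound along the segment from $0$ to $x$ via the fundamental theorem of calculus, then absorbing the lower-order term with $\|x\|\leq 1+\|x\|^{c+1}$ and tracking the constants---is precisely the standard route and supplies the details the paper omits.
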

\begin{proof}
See, for example, Lemma 3.1 in \cite{HJK2012}. 
\end{proof}
The Euler-Maruyama numerical method and the notion of strong convergence may be expressed as follows.
\begin{definition}
Fix $T<\infty$ and $N\in\mathbb{N}$, and define $h=T/N$. The Euler-Maruyama discretisation of \eqref{eq:SDE} over the interval $[0,T]$ with $N$ steps is given by
\begin{align}\label{eq:EM}
X_{n+1}^N&=X_n^N+hf(X_n^N)+g(X_n^N)(W((n+1)h)-W(nh)),\quad n=0,\ldots,N,\\
X_0&=X(0).\nonumber
\end{align}
\end{definition}
\begin{definition}\label{def:strongConvergence}
If there exists $p\in[1,\infty)$ and constants $C_p,\beta>0$ such that
\[
\left(\mathbb{E}\left[\|X(T)-X_N^N\|^p\right]\right)^{1/p}\leq C_p h^{\beta},
\]
then the Euler-Maruyama method given by \eqref{eq:EM} is said to converge strongly with order $\beta$ in $\mathcal{L}_p$ to solutions of \eqref{eq:SDE} over the interval $[0,T]$.
\end{definition}

In the scalar single noise case, Hutzenthaler \& Jentzen~\cite[Theorem 1]{HJ2011}, showed that the Euler-Maruyama method given in \eqref{eq:EM} cannot converge strongly if at least one of the coefficients grows superlinearly. We restate their result here:
\begin{theorem}
Let $d=m=1$, and let $C\geq 1$, $\beta>\alpha>1$ be constants such that
\[
\max\left\{|f(x)|,|g(x)|\right\}\geq \frac{|x|^\beta}{C}\quad\text{and}\quad\min\left\{|f(x)|,|g(x)|\right\}\leq C|x|^\alpha
\]
for all $|x|\geq C$. If the exact solution of \eqref{eq:SDE} satisfies $\mathbb{E}[|X(T)|^p]<\infty$ for some $p\in[1,\infty)$, then
\[
\lim_{N\to\infty}\mathbb{E}\left[|X(T)-X_N^N|^p\right]=\infty\quad \text{and}\quad \lim_{N\to\infty}\left|\mathbb{E}\left[|X(T)|^p\right]-\mathbb{E}\left[|X_N^N|^p\right]\right|=\infty.
\]
\end{theorem}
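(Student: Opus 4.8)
The plan is to derive both limits from the single claim that $\mathbb{E}[|X_N^N|^p]\to\infty$ as $N\to\infty$. Because $\mathbb{E}[|X(T)|^p]$ is finite by hypothesis, the reverse triangle inequality in $\mathcal{L}_p$ gives
\[
\left(\mathbb{E}[|X(T)-X_N^N|^p]\right)^{1/p}\geq\left(\mathbb{E}[|X_N^N|^p]\right)^{1/p}-\left(\mathbb{E}[|X(T)|^p]\right)^{1/p},
\]
so divergence of $\mathbb{E}[|X_N^N|^p]$ forces $\mathbb{E}[|X(T)-X_N^N|^p]\to\infty$; likewise $|\mathbb{E}[|X(T)|^p]-\mathbb{E}[|X_N^N|^p]|\geq\mathbb{E}[|X_N^N|^p]-\mathbb{E}[|X(T)|^p]\to\infty$. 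The whole theorem therefore reduces to showing that the numerical moments blow up.

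To produce this blow-up I would isolate a rare but not-too-rare event on which a single large Brownian increment kicks the discretisation past a critical threshold, after which the superlinearly-growing coefficient drives runaway amplification. With $h=T/N$, consider first the drift-dominated case $|f(x)|\geq|x|^\beta/C$. Let $\Omega_N$ be the event that the first increment $W(h)$ exceeds a threshold $r_N$ of order $h^{-1/(\beta-1)}$ (so that $|X_1^N|\geq 2r_N$), intersected with the event that the remaining increments are modest, $|W((n+1)h)-W(nh)|\leq 1$ for $1\leq n\leq N-1$. The scale $h^{-1/(\beta-1)}$ is precisely the threshold above which the explicit drift map $x\mapsto x+hf(x)$ ceases to contract: once $|X_n^N|\geq r_N$, the bound $|hf(X_n^N)|\geq h|X_n^N|^\beta/C$ dominates both $|X_n^N|$ and the noise term, giving $|X_{n+1}^N|\geq\tfrac12 h|X_n^N|^\beta/C$. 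Passing to logarithms converts this into the affine recursion $\log|X_{n+1}^N|\geq\beta\log|X_n^N|-\log(2C/h)$, whose fixed point $\log(2C/h)/(\beta-1)$ coincides up to a constant with $\log r_N$; choosing $|X_1^N|$ strictly above this fixed point makes the iterates escape at rate $\beta^n$, yielding the doubly-exponential lower bound $|X_N^N|\geq\exp(c\beta^N)$ on $\Omega_N$.

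It then remains to weigh this against $\mathbb{P}(\Omega_N)$. Since $W(h)\sim\mathcal{N}(0,h)$, the Gaussian tail at the threshold $r_N\sim h^{-1/(\beta-1)}$ gives $\mathbb{P}(W(h)\geq r_N)\geq\exp(-cN^{\gamma})$ for a fixed polynomial power $\gamma=\gamma(\beta)>0$, while pinning the remaining $N-1$ increments below $1$ costs only a factor bounded away from zero. Hence
\[
\mathbb{E}[|X_N^N|^p]\geq\mathbb{E}\!\left[|X_N^N|^p\mathbf{1}_{\Omega_N}\right]\geq\exp(pc\beta^N)\,\mathbb{P}(\Omega_N)\geq\exp\!\left(pc\beta^N-cN^{\gamma}\right)\to\infty,
\]
the divergence being forced by the fact that the doubly-exponential growth $\beta^N$ dwarfs the polynomial-in-$N$ exponent $N^\gamma$ of the probability decay. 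This is the decisive step.

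The hard part will be making the runaway-growth step rigorous and robust, and two issues dominate. First, I must confirm that the modest bound on the later increments genuinely preserves the amplification: the noise term $g(X_n^N)(W((n+1)h)-W(nh))$ must not cancel the drift kick, which is exactly where the hypothesis $\min\{|f|,|g|\}\leq C|x|^\alpha$ with $\alpha<\beta$ enters, since it keeps the subordinate coefficient polynomially below the $|x|^\beta$ term driving the blow-up. Second, I must treat the diffusion-dominated case, in which $g$ rather than $f$ grows superlinearly; there both the initial kick and the subsequent amplification are carried by the increments, so the conditioning event must instead force several increments to be large with a consistent sign, costing more probability and requiring a reworked recursion for $\log|X_n^N|$. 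Unifying the two cases, and calibrating the threshold constants so that $X_1^N$ lands strictly above the recursion's fixed point for every $\beta>1$, is the delicate technical core of the argument.
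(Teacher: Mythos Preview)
The paper does not prove this theorem at all: it is restated verbatim from Hutzenthaler and Jentzen~\cite[Theorem 1]{HJ2011} as background, with no proof supplied. There is therefore nothing in the paper to compare your proposal against.

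That said, your sketch is essentially the argument of the original source. The reduction to $\mathbb{E}[|X_N^N|^p]\to\infty$ via the reverse triangle inequality, the choice of a rare event on which one large Gaussian increment pushes the iterate past the threshold $h^{-1/(\beta-1)}$, the recursion $\log|X_{n+1}^N|\geq\beta\log|X_n^N|-O(\log N)$ giving doubly-exponential growth, and the balancing of $\exp(c\beta^N)$ against a probability of order $\exp(-cN^\gamma)$ are exactly the ingredients used in~\cite{HJ2011}. You have also correctly identified where the hypothesis $\alpha<\beta$ on the subordinate coefficient is consumed (to prevent cancellation of the dominant kick) and that the diffusion-dominated case needs a separate conditioning. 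As a roadmap your proposal is sound; the remaining work is the bookkeeping you flag at the end, namely calibrating constants so the first step lands strictly above the fixed point and handling both the $f$-dominant and $g$-dominant regimes uniformly.
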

The drift-tamed Euler--Maruyama method given by
\begin{equation}\label{eq:TE}
Y_{n+1}^N=Y_n^N+\frac{hf(Y_n^N)}{1+h\|f(Y_n^N)\|}+g(Y_n^N)(W((n+1)h)-W(nh)),\quad n=0,\ldots,N,
\end{equation}
was introduced in \cite{HJK2012} to provide an explicit numerical
method that would display strong convergence in circumstances where
the Euler-Maruyama method does not. In fact, strong convergence was
proved under Conditions \eqref{eq:fDer}--\eqref{eq:gLcond}. The
following theorem states two key results from that article: the 
first on boundedness of moments, the second on strong convergence. 

\begin{theorem}\cite{HJK2012} \label{thm:tamedConv}
Let $X(t)$ be a solution of \eqref{eq:SDE}, where $f$ and $g$ satisfy
Conditions \eqref{eq:fDer}--\eqref{eq:gLcond}. Let $\{Y_n^N\}$ be a solution of \eqref{eq:TE}. Then
\begin{equation}\label{eq:momentBounds}
\sup_{n\in\mathbb{N}}\sup_{n\in\{0,1,\ldots,N\}}\mathbb{E}[\|Y_n^N\|^p]<\infty.
\end{equation}
Let $\{\bar Y^N\}$ be a sequence  of continuous time interpolants of the time discrete approximation \eqref{eq:TE}. There exists a family $C_p$, $p\in[1,\infty)$ of real numbers such that
\begin{equation*}\label{eq:strongConv}
\left(\mathbb{E}\left[\sup_{t\in[0,T]}\|X(t)-\bar Y_t^N\|^p\right]\right)^{1/p}\leq C_p h^{1/2},
\end{equation*}
for all $N\in\mathbb{N}$ and all $p\in[1,\infty)$.
\end{theorem}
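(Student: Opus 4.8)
The plan is to follow the two-stage structure standard for strong-convergence proofs of tamed schemes: first establish uniform-in-$N$ moment bounds for the numerical solution \eqref{eq:TE}, then bound the error of a continuous-time interpolant against the exact solution by a single energy estimate. Throughout I write $\tilde f_h(y):=f(y)/(1+h\|f(y)\|)$ for the tamed drift and $\mathcal D_n:=h\,\tilde f_h(Y_n^N)$ for the tamed increment, and I record the two structural facts that drive everything: (i) $\|\mathcal D_n\|\le\min\{1,\,h\|f(Y_n^N)\|\}$, so the increment is deterministically bounded by $1$; and (ii) since $(1+h\|f(y)\|)^{-1}\le 1$ and $\langle f(y),y\rangle\le\alpha\|y\|^2+\|f(0)\|\,\|y\|$ by \eqref{eq:osLcond} with $y\mapsto 0$, we have $\langle Y_n^N,\mathcal D_n\rangle\le h(\alpha\|Y_n^N\|^2+\|f(0)\|\,\|Y_n^N\|)$.

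For the moment bound \eqref{eq:momentBounds} I would first dispatch $p=2$. Expanding $\|Y_{n+1}^N\|^2$ and taking conditional expectation given $\mathcal F_{t_n}$, the stochastic increment $g(Y_n^N)\DW_n$ kills the cross terms (it is independent of the $\mathcal F_{t_n}$-measurable data), fact (ii) controls $\langle Y_n^N,\mathcal D_n\rangle$, fact (i) gives $\|\mathcal D_n\|^2\le 1$, and the linear growth of $g$ inherited from \eqref{eq:gLcond} controls $\expect{\|g(Y_n^N)\DW_n\|^2\mid\mathcal F_{t_n}}$. This yields $\expect{\|Y_{n+1}^N\|^2}\le(1+Ch)\expect{\|Y_n^N\|^2}+Ch$, and discrete Gronwall closes the estimate uniformly in $N$.

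The step up to arbitrary $p$ is where the real difficulty lies, and I expect it to be the \emph{main obstacle}. A naive expansion of $\|Y_{n+1}^N\|^{2p}$ produces terms such as $\|Y_n^N\|^{2p-2}\|\mathcal D_n\|^2$ for which fact (i) only supplies $\|\mathcal D_n\|^2\le1$ with no compensating power of $h$; summed over the $N=T/h$ steps such a term threatens to diverge as $h\to0$, while replacing it by $\|\mathcal D_n\|^2\le h^2\|f(Y_n^N)\|^2$ merely trades the $h$-deficit for a higher polynomial degree via \eqref{eq:fbound}, leaving the induction circular. The resolution, following \cite{HJK2012}, is to split the sample space at each step according to whether $h\|f(Y_n^N)\|$ is small or large. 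On the typical event the tamed increment is genuinely of order $h\|f(Y_n^N)\|$ and behaves like the untamed drift, so the one-sided Lipschitz structure applies; on the rare large-excursion event one falls back on $\|\mathcal D_n\|\le1$, and the point is to show — using the linear growth of $g$, the boundedness of the drift increment, and Gaussian tail bounds for $\DW_n$ — that the probability of this event is small enough that its contribution is still $O(h)$. Carrying this decomposition through an induction on $p$ (climbing a finite ladder of moments) delivers \eqref{eq:momentBounds} for every $p$.

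With the moment bounds in hand, the rate follows from one energy estimate. I would introduce the interpolant $\bar Y^N$ with $\bar Y^N_{t_n}=Y_n^N$ and $d\bar Y^N_t=\tilde f_h(Y^N_{\lfloor t\rfloor})\,dt+g(Y^N_{\lfloor t\rfloor})\,dW_t$, where $\lfloor t\rfloor=t_n$ on $[t_n,t_{n+1})$, apply \Ito's formula to $\|X(t)-\bar Y^N_t\|^{2p}$, and split the drift difference as $f(X(t))-\tilde f_h(Y^N_{\lfloor t\rfloor})=[f(X(t))-f(\bar Y^N_t)]+[f(\bar Y^N_t)-f(Y^N_{\lfloor t\rfloor})]+[f(Y^N_{\lfloor t\rfloor})-\tilde f_h(Y^N_{\lfloor t\rfloor})]$. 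In the inner product the first bracket is handled by \eqref{eq:osLcond}, producing $\alpha\|X(t)-\bar Y^N_t\|^2$; the second is a coefficient-freezing term bounded using the polynomial growth \eqref{eq:fDer} of $Df$ together with the one-step estimate $\expect{\|\bar Y^N_t-Y^N_{\lfloor t\rfloor}\|^q}\le Ch^{q/2}$ (itself from fact (i) and the moment bounds); and the third is the taming perturbation, which satisfies $\|f(y)-\tilde f_h(y)\|\le h\|f(y)\|^2\le Ch(1+\|y\|^{2(c+1)})$ by \eqref{eq:fbound}, so the moment bounds render its expected contribution $O(h)$. The diffusion difference is absorbed by \eqref{eq:gLcond} and the same one-step estimate, with the exact-solution moments of \lemref{lem:boundedMomentsSDE} used wherever polynomial terms in $X(t)$ appear. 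Collecting terms gives $\tfrac{d}{dt}\expect{\|X(t)-\bar Y^N_t\|^{2p}}\le C\expect{\|X(t)-\bar Y^N_t\|^{2p}}+Ch^p$, and Gronwall yields the $\mathcal L_{2p}$ rate $h^{1/2}$. Finally, to move the supremum inside the expectation I would estimate $\expect{\sup_{t\le T}\|X(t)-\bar Y^N_t\|^{2p}}$ directly, applying the Burkholder--Davis--Gundy inequality to the martingale part before invoking Gronwall, which preserves the order-$1/2$ rate and completes the proof.
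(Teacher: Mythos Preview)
The paper does not prove this theorem: it is stated, with attribution to \cite{HJK2012}, as a known result from the literature and no proof is given in the present article. Your proposal is a reasonable high-level sketch of the argument in \cite{HJK2012} itself (moment bounds via the bounded-increment/one-sided-Lipschitz dichotomy, then an energy estimate on the interpolant), but there is nothing in this paper to compare it against.
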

Higham, Mao \& Stuart \cite{HMS2002} showed that the Euler-Maruyama
scheme \eqref{eq:EM} is strongly convergent in the sense of \cref{def:strongConvergence} if its moments are bounded in the sense of \eqref{eq:momentBounds} in the statement of Theorem \ref{thm:tamedConv}. In Section \ref{sec:adaptive} we show how stepsize control can be used to bound the drift response pathwise, sufficient to ensure strong convergence.

\section{Adaptive timestepping strategies}
\label{sec:adaptive}
\subsection{Euler-type schemes with random timesteps}
Consider the following Euler-type method for \eqref{eq:SDE} over a
random mesh $\{t_n\}_{n\in\mathbb{N}}$ on the interval $[0,T]$ given by 
\begin{equation}\label{eq:SchemeEM}
  Y_{n+1}=Y_n+h_{n+1}f(Y_{n})+g(Y_{n})\left(W(t_{n+1})-W(t_n)\right),\quad Y_0=X_0,\quad n< N,
\end{equation}
where $\{h_n\}_{n\in\mathbb{N}}$ is a sequence of random timesteps,
and $\{t_n:=\sum_{i=1}^{n}h_i\}_{n=1}^N$ with $t_0=0$. The random time
steps $h_{n+1}$ (and the corresponding point on the random mesh $t_{n+1}$) are to be determined by the value of $Y_n$.
\begin{definition}
Suppose that each member of the sequence $\{t_n\}_{n\in\mathbb{N}}$ is an $(\mathcal{F}_t)$-stopping time: i.e. $\{t_n\leq t\}\in\mathcal{F}_t$ for all $t\geq 0$, where $(\mathcal{F}_t)_{t\geq 0}$ is the natural filtration of $W$. We may then define a discrete-time filtration $\{\mathcal{F}_{t_n}\}_{n\in\mathbb{N}}$ by
\[
\mathcal{F}_{t_n}=\{A\in\mathcal{F}\,:\,A\cap\{t_n\leq t\}\in\mathcal{F}_t\},\quad n\in\mathbb{N}.
\]
\end{definition}
\begin{assumption}\label{assum:h}
Suppose that each $h_n$ is $\mathcal{F}_{t_{n-1}}$-measurable, let $N$ be a random integer such that
\[
N:=\max\{n\in\mathbb{N}\,:\,t_{n-1}<T\} \quad \text{and} \quad t_N=T.
\]
In addition let $h_n$ satisfy the following constraint: minimum and maximum stepsizes $h_{\text{min}}$ and $h_{\text{max}}$ are imposed in a fixed ratio $0<\rho\in\mathbb{R}$ so that 
  \begin{equation}\label{eq:hRatio}
    h_{\text{max}}=\rho h_{\text{min}}.
  \end{equation}
\end{assumption}
In \cref{assum:h}, the lower bound $h_{\text{min}}$ ensures that a simulation over the interval $[0,T]$ can be completed in a finite number of timesteps. In the event that at time $t_{n}$ we compute $h_{n+1}=h_{\text{min}}$, we apply a single step of the drift-tamed Euler method \eqref{eq:TE} over a timestep of length
$h=h_{\text{min}}$, rather than \eqref{eq:SchemeEM}. Therefore the adaptive timestepping scheme under investigation in this article is
\begin{multline}\label{eq:Scheme}
  Y_{n+1}=Y_n+h_{n+1}\left[f(Y_{n})\mathcal{I}_{\left\{h_{n+1}>h_{\text{min}}\right\}}+\frac{f(Y_{n})}{1+h_{\text{min}}\|f(Y_n)\|}\mathcal{I}_{\left\{h_{n+1}=
      h_{\text{min}}\right\}}\right]\\+g(Y_{n})\left(W(t_{n+1})-W(t_n)\right),\quad n=0,\ldots,N-1.
\end{multline}
The upper bound $h_{\text{max}}$ prevents stepsizes from becoming too large and allows us to examine the strong convergence of the adaptive method \eqref{eq:Scheme} to solutions of \eqref{eq:SDE} as $h_{\text{max}}\to 0$ (and hence as $h_{\text{min}}\to 0$).

\begin{remark}\label{rem:conditionalMoments}
In \eqref{eq:Scheme}, note that each $W(t_{n+1})-W(t_n)$ is a Wiener
increment taken over a random step of length $h_{n+1}$ which itself
may depend on $Y_n$, and therefore is not necessarily normally
distributed. However, if $h_{n+1}$ is an
$\mathcal{F}_{t_{n}}$-stopping time then $W(t_{n+1})-W(t_n)$ is
$\mathcal{F}_{t_n}$-conditionally normally distributed with, almost surely (a.s.),
$$\mathbb{E}\left[\|W(t_{n+1})-W(t_n)\|\bigg|\mathcal{F}_{t_{n}}\right]=0,\qquad
\mathbb{E}\left[\|W(t_{n+1})-W(t_n)\|^2\bigg|\mathcal{F}_{t_{n}}\right]=h_{n+1}.$$
In practice therefore, we can replace the sequence of Wiener increments with i.i.d. $\mathcal{N}(0,1)$ random variables denoted $\{\xi_n\}_{n=1}^{N}$, scaled at each step by the $\mathcal{F}_{t_n}$-measurable random variable $\sqrt{h_{n+1}}$.
\end{remark}

In Sections \ref{sec:hTaming} and \ref{sec:locdyn} we provide two
motivating discussions, each illustrating how a timestepping strategy
can be designed. The first focuses on the properties of the drift-tamed Euler method \eqref{eq:TE}, the second on the local dynamics of polynomial maps. In Section \ref{sec:admiss} we set out a sufficient set of conditions for such strategies to ensure that solutions of \eqref{eq:Scheme} converge strongly to those of \eqref{eq:SDE}.
\subsection{Stepsize selection via the drift-tamed Euler map}\label{sec:hTaming}
For the stochastic differential equation \eqref{eq:SDE} the explicit Euler and drift-tamed Euler maps associated with the drift coefficient $f$ are
\begin{equation*}
F_h(y)=y+hf(y)\qquad \text{and} \qquad\tilde{F}_h(y)=y+\frac{hf(y)}{1+h\|f(y)\|}
\end{equation*}
respectively. At each timestep we choose $h(y)$ so that 
\begin{equation}\label{eq:tol}
\|F_{h}(y)-\tilde{F}_{h}(y)\| = \frac{h^2\|f(y)\|^2}{1+h\|f(y)\|}  <\varepsilon
\end{equation}
for some tolerance $\varepsilon>0$. Equivalently we  have   $h^2\|f(y)\|^2-\varepsilon h\|f(y)\|-\varepsilon<0$ and so require $h$ such that
$$\frac{\varepsilon-\sqrt{\varepsilon^2+4\varepsilon}}{2\|f(y)\|}<h<\frac{\varepsilon+\sqrt{\varepsilon^2+4\varepsilon}}{2\|f(y)\|}.$$
Since $\varepsilon-\sqrt{\varepsilon^2+4\varepsilon}<0$ for all $\varepsilon>0$ we are left with the requirement that
\begin{equation*}
0<h<\frac{1}{\|f(y)\|}\left[\frac{\varepsilon+\sqrt{\varepsilon^2+4\varepsilon}}{2}\right],
\end{equation*}
for \eqref{eq:tol} to hold. This leads to an adaptive strategy
\begin{equation}\label{eq:stepAdaptTame}
h_{n+1}(Y_n)=\max\left\{h_{\text{min}},\min\left\{h_{\text{max}},\,\frac{1}{\|f(Y_n)\|}\left[\frac{\varepsilon+\sqrt{\varepsilon^2+4\varepsilon}}{2}\right]\right\}\right\}.
\end{equation}
By construction, each term in the sequence
$\{h_{n}\}_{n\in\mathbb{N}}$ is an $\mathcal{F}_{t_{n-1}}$-measurable
random variable, and \cref{assum:h} holds.

\subsection{Stepsize selection via local dynamics}\label{sec:locdyn}
Consider the drift coefficient function
\begin{equation}\label{eq:fpoly}
f(x)=-\gamma x|x|^\nu,\quad x\in\mathbb{R},
\end{equation}
where $\gamma,\nu>0$. The associated Euler map with stepsize $h$ is given by the function
\begin{equation}\label{def:Fh}
F_h(x)=x-h\gamma x|x|^\nu,\quad x\in\mathbb{R}.
\end{equation}
Consider the discrete-time dynamics of the map given by \eqref{def:Fh} (a more detailed analysis may be found in \cite{AKMR}). The difference equation 
\[x_{n+1}=F_h(x_n)
\] 
has a stable equilibrium solution at zero and an unstable two-cycle at $\left\{\pm \sqrt[\nu]{2/(h\gamma)}\right\}$.  So the basin of attraction of the zero solution is $|x_0|<\sqrt[\nu]{2/(h\gamma)}$. For fixed $\gamma$, we can increase the size of the basin of attraction arbitrarily by choosing $h$ sufficiently small. Moreover, the derivatives are
\[
F_h'(x)=\left\{\begin{array}{ll}
1-h\gamma(\nu+1)x^\nu, & x\geq 0,\\
1+h\gamma(\nu+1)x^\nu, & x<0,
\end{array}\right.
\]
and so outside of the basin of attraction, repeated applications of the map induce oscillations that grow rapidly at a rate determined by $\nu$. At each iteration, a stochastic perturbation with non-compact support can move trajectories outside the basin of attraction and into a regime characterised by rapidly growing oscillation.

This suggests an adaptive timestepping strategy motivated by the control of stability. Our approach is as follows. For \eqref{eq:SDE} with drift coefficient given by \eqref{eq:fpoly}, we select each stepsize to be
\[
h_{n+1}=\max\left\{h_{\text{min}},\min\left\{h_{\text{max}},\frac{1}{\gamma |Y_n^N|^\nu}\right\}\right\}.
\]
This ensures that if the solution moves out of the basin of attraction of the unperturbed equation then the stepsize is decreased so that it is included once again. 

This strategy can be extended to equations with a drift coefficient satisfying the polynomial bound 
\begin{equation}\label{cond:driftsuperlin}
\|f(x)\|\geq \|x\|^\beta/C,
\end{equation}
for $C\geq 1$, $\beta>1$ and all $\|x\|\geq C$, by considering the
basin of attraction of the Euler map corresponding to the polynomial
bound on $f$. This suggests the following adaptation strategy:
\begin{equation}\label{eq:stepAdaptBasin}
h_{n+1}=\max\left\{h_{\text{min}},\min\left\{h_{\text{max}},\frac{\delta}{\|Y_n^N\|^{\beta-1}}\right\}\right\|
\end{equation}
for equations with drift satisfying \eqref{cond:driftsuperlin} with $\beta$ an odd integer and an appropriately chosen $\delta\leq h_{\text{max}}$. More generally, if we consider the growth over a single step of a perturbation $v$ 
governed by the linear equation $$v^{new} = (I + h Df)v \quad \text{so that} \quad  h = \frac{v^{new} -v}{Dfv},$$
then the following strategy is indicated: for some $\delta\leq h_{\text{max}}$, let
\begin{equation}
  h_{n+1} = \max\left\{h_{\text{min}},\min\left\{h_{\text{max}},\frac{\delta}{\|Df(Y_n)\|}\right\}\right\}.
  \label{HL2}
\end{equation}\label{eq:NJB}

\subsection{Strong convergence of adaptive timestepping methods}\label{sec:admiss}
We begin by defining a class of timestepping strategies that guarantee the strong convergence of solutions of \eqref{eq:Scheme} to solutions of \eqref{eq:SDE} by ensuring that, at each step of the discretisation, the norm of the drift response has a pathwise linear bound.
\begin{definition}\label{def:hAdmiss}
Let $\{Y_n\}_{n\in\mathbb{N}}$ be a solution of \eqref{eq:Scheme} where $f$ satisfies \eqref{eq:fDer}-\eqref{eq:osLcond} and $g$ satisfies \eqref{eq:gLcond}. We say that $\{h_n\}_{n\in\mathbb{N}}$ is an \emph{admissible timestepping strategy} for \eqref{eq:Scheme} if \cref{assum:h} is satisfied and there exists real non-negative constants $R_1, R_2<\infty$ such that whenever $h_{\text{min}}<h_{n}<h_{\text{max}}$,
\begin{equation}\label{eq:normfBound}
\|f(Y_n)\|^2\leq R_1+R_2\|Y_n\|^2,\quad n=0,\ldots,N-1.
\end{equation}
\end{definition}

In the next Lemma we provide specific examples of admissible timestepping schemes.
\begin{lemma}\label{lem:normfBound}
Let $\{Y_n\}_{n\in\mathbb{N}}$ be a solution of \eqref{eq:Scheme}, let
$\delta\leq h_{\text{max}}$, and let  $c$ be the constant in
\eqref{eq:fbound}. Let $\{h_n\}_{n\in\mathbb{N}}$ be a timestepping strategy that satisfies \cref{assum:h}. $\{h_n\}_{n\in\mathbb{N}}$ is admissible for \eqref{eq:Scheme} if, for each $n=0,\ldots,N-1$, one of the following holds
\begin{itemize}
\item[(i)] $h_{n+1}\leq\delta/(\|f(Y_n)\|)$;
\item[(ii)] $h_{n+1}\leq \delta/(1+\|Y_n\|^{1+c})$;
\item[(iii)] $h_{n+1}\leq\delta\|Y_n\|/(\|f(Y_n)\|)$;
\item[(iv)] $h_{n+1}\leq \delta\|Y_n\|/(1+\|Y_n\|^{1+c})$,
\end{itemize}
whenever $h_{\text{min}}<h_{n}<h_{\text{max}}$.
\end{lemma}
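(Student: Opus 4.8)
The plan is to fix an index $n$ for which the step lies strictly above the minimum, so that the plain (untamed) update of \eqref{eq:Scheme} is used and $h_{n+1}>\hmin$, and to show that each of the four alternatives (i)--(iv) forces a bound of the form $\|f(Y_n)\|^2\le R_1+R_2\|Y_n\|^2$; the admissibility constants are then recovered by taking $R_1$ and $R_2$ to be the largest of the constants produced in the four cases. The single ingredient that makes everything work is the lower bound $h_{n+1}>\hmin$ available in this regime: the conditions (i)--(iv) are all \emph{upper} bounds on $h_{n+1}$ written in terms of $\|f(Y_n)\|$ and $\|Y_n\|$, and dividing such a bound by the positive quantity $\hmin$ converts it into an upper bound on $\|f(Y_n)\|$.

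First I would reduce (ii) and (iv) to (i) and (iii). By \lemref{HJK012_FBound} we have $\|f(Y_n)\|\le c_1(1+\|Y_n\|^{1+c})$, so whenever $\|f(Y_n)\|>0$ it follows that $1/(1+\|Y_n\|^{1+c})\le c_1/\|f(Y_n)\|$. Hence (ii) implies $h_{n+1}\le \delta c_1/\|f(Y_n)\|$ and (iv) implies $h_{n+1}\le \delta c_1\|Y_n\|/\|f(Y_n)\|$; these are exactly (i) and (iii) with $\delta$ replaced by $\delta c_1$. It therefore suffices to treat the two master cases $h_{n+1}\le \tilde\delta/\|f(Y_n)\|$ and $h_{n+1}\le \tilde\delta\|Y_n\|/\|f(Y_n)\|$, where $\tilde\delta\in\{\delta,\delta c_1\}$.

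For the first master case, clearing denominators gives $h_{n+1}\|f(Y_n)\|\le\tilde\delta$, and since $\hmin<h_{n+1}$ we obtain $\hmin\|f(Y_n)\|\le h_{n+1}\|f(Y_n)\|\le\tilde\delta$, that is $\|f(Y_n)\|\le\tilde\delta/\hmin$; squaring yields \eqref{eq:normfBound} with $R_1=(\tilde\delta/\hmin)^2$ and $R_2=0$. For the second master case the same manipulation gives $\hmin\|f(Y_n)\|\le\tilde\delta\|Y_n\|$, hence $\|f(Y_n)\|^2\le(\tilde\delta/\hmin)^2\|Y_n\|^2$, which is \eqref{eq:normfBound} with $R_1=0$ and $R_2=(\tilde\delta/\hmin)^2$. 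Since for each $n$ one of (i)--(iv) holds, taking $R_1=R_2=(\delta\max\{1,c_1\}/\hmin)^2$ furnishes a single pair of constants valid for every interior $n$, and the strategy then satisfies \cref{def:hAdmiss}.

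I do not expect a genuine obstacle here; the substance is the observation that the lower clamp $\hmin$ is precisely what turns a stepsize bound into a drift bound, so the proof is essentially an inversion of the four inequalities together with an application of \lemref{HJK012_FBound}. The only points requiring care are bookkeeping ones: the degenerate value $\|f(Y_n)\|=0$, for which \eqref{eq:normfBound} holds trivially and the divisions above are simply avoided; the analogous triviality when $\|Y_n\|=0$; and the indexing convention relating the interior-step condition to the drift evaluated at $Y_n$.
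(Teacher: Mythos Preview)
Your argument is correct and follows the same core idea as the paper: invert the stepsize upper bound using the lower clamp $h_{n+1}\ge\hmin$ to obtain a bound on $\|f(Y_n)\|$. The paper treats each of (i)--(iv) directly, whereas you first reduce (ii) and (iv) to (i) and (iii) via \lemref{HJK012_FBound}; this is a clean reorganisation but not a different method. One small point worth tightening: you leave your constants as $(\tilde\delta/\hmin)^2$, while the paper immediately uses $\delta\le\hmax$ and \eqref{eq:hRatio} to simplify $\delta/\hmin\le\hmax/\hmin=\rho$, giving $R_1,R_2$ in terms of the fixed ratio $\rho$ alone---this matters downstream since the convergence constant in \cref{thm:adaptConv} must be independent of $\hmax$.
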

\begin{proof}
For Part (i) we can apply \eqref{eq:hRatio}:
\[
\|f(Y_n)\|^2\leq\left(\frac{\delta}{h_{n+1}}\right)^2\leq\frac{h^2_{\text{max}}}{h^2_{\text{min}}}=\rho^2,
\]
and so  \eqref{eq:normfBound} is satisfied with $R_1=\rho^2$ and $R_2=0$.

For Part (ii), by \eqref{eq:fbound} and \eqref{eq:hRatio} we have
\[
\|f(Y_n)\|^2\leq (2c+\|f(0)\|)^2(1+\|Y_n\|^{1+c})^2\leq (2c+\|f(0)\|)^2\frac{h^2_{\text{max}}}{h^2_{n+1}}\leq(2c+\|f(0)\|)^2 \rho^2.
\]
and so \eqref{eq:normfBound} is satisfied with $R_1=(2c+\|f(0)\|)^2 \rho^2$ and $R_2=0$.

For Parts (iii) and (iv) similar arguments give the bounds $\|f(Y_n)\|^2|\leq \rho^2\|Y_n\|^2$ and $\|f(Y_n)\|^2\leq(2c+\|f(0)\|)^2 \rho^2\|Y_n\|^2$ respectively, so \eqref{eq:normfBound} is satisfied with $R_1=0$,  $R_2=\rho^2$ for Part (iii), and $R_2=(2c+\|f(0)\|)^2 \rho^2$ for Part (iv).
\end{proof}

Our main result shows the strong convergence in $\mathcal{L}_2$ with order $1/2$ of solutions of \eqref{eq:Scheme} to solutions of \eqref{eq:SDE} when $\{h_{n}\}_{n\in\mathbb{N}}$ is an admissible timestepping strategy. 
\begin{theorem}\label{thm:adaptConv}
Let $(X(t))_{t\in[0,T]}$ be a solution of
\eqref{eq:SDE} with initial value $X(0)=X_0$. Let
$\{Y_n\}_{n\in\mathbb{N}}$ be a solution of \eqref{eq:Scheme} with initial value $Y_0=X_0$ and admissible timestepping strategy $\{h_n\}_{n\in\mathbb{N}}$ satisfying the conditions of \cref{def:hAdmiss}. Then 
\begin{equation*}\label{eq:adaptConv}
\left(\mathbb{E}\left[\|X(T)-Y_N\|^2\right]\right)^{1/2}\leq Ch_{\text{max}}^{1/2},
\end{equation*}
for some $C>0$, independent of $h_{\text{max}}$.
\end{theorem}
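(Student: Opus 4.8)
The overall plan is to reduce the statement to a uniform bound on the moments of the numerical solution and then invoke the framework of \cite{HMS2002}, in which bounded moments of an Euler-type scheme, together with the one-sided Lipschitz condition \eqref{eq:osLcond} and the global Lipschitz condition \eqref{eq:gLcond}, yield strong convergence of order $1/2$ in $\mathcal{L}_2$. The conceptual point is that the admissibility condition \eqref{eq:normfBound} \emph{linearises the drift response}: wherever a full Euler step is taken, $\|f(Y_n)\|^2$ is bounded linearly in $\|Y_n\|^2$, so that, as far as moment growth is concerned, the scheme behaves like an Euler scheme with globally Lipschitz drift. The two features that require care are the steps on which the tamed drift is used (those with $h_{n+1}=\hmin$) and the fact that the mesh $\{t_n\}$ is random.

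First I would establish $\sup_n\expect{\|Y_n\|^{2p}}<\infty$ for each $p\ge1$. Squaring \eqref{eq:Scheme}, conditioning on $\mathcal{F}_{t_n}$, and using \cref{rem:conditionalMoments} to annihilate the two martingale cross-terms and to evaluate $\expect{\|g(Y_n)\DW_n\|^2\mid\mathcal{F}_{t_n}}=h_{n+1}\|g(Y_n)\|_F^2$, one obtains, writing $\hat f$ for the drift actually used,
\[ \expect{\|Y_{n+1}\|^2\mid\mathcal{F}_{t_n}}=\|Y_n\|^2+2h_{n+1}\ip{Y_n,\hat f(Y_n)}+h_{n+1}^2\|\hat f(Y_n)\|^2+h_{n+1}\|g(Y_n)\|_F^2. \]
The inner-product term is controlled for both the plain and the tamed drift by \eqref{eq:osLcond} with $y=0$, giving $\ip{Y_n,\hat f(Y_n)}\le\alpha\|Y_n\|^2+\|f(0)\|\,\|Y_n\|$ (for the tamed drift the factor $(1+\hmin\|f\|)^{-1}\le1$ only shrinks a positive inner product and makes a negative one less negative). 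Where a full Euler step is taken, \eqref{eq:normfBound} (together with the clamping at $\hmax$ on the capped steps) gives $h_{n+1}^2\|f(Y_n)\|^2\le\hmax h_{n+1}(R_1+R_2\|Y_n\|^2)$, which is of order $h_{n+1}$; with \eqref{eq:gLcond} the right-hand side is then at most $(1+Kh_{n+1})\|Y_n\|^2+Kh_{n+1}$. Setting $Z_n:=1+\|Y_n\|^2$, this reads $\expect{Z_{n+1}\mid\mathcal{F}_{t_n}}\le e^{Kh_{n+1}}Z_n$, and since each $t_n=\sum_{i\le n}h_i$ is $\mathcal{F}_{t_n}$-measurable the process $V_n:=e^{-Kt_n}Z_n$ is a non-negative supermartingale; as $t_n\le T$ this yields $\expect{Z_n}\le e^{KT}(1+\|X_0\|^2)$, uniformly in $n$. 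This supermartingale is the device that absorbs the randomness of the mesh, in place of the discrete Gr\"onwall lemma of the fixed-step theory, and the higher moments $\expect{Z_n^p}$ follow from the same conditional computation, the extra powers of $\DW_n$ contributing only terms of order $h_{n+1}^2$ and smaller.

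With bounded moments for both $\{Y_n\}$ and (via \cref{lem:boundedMomentsSDE}) for $X$, the convergence follows the one-sided Lipschitz error analysis. Introduce the adapted continuous interpolant $\bar Y$ with $\bar Y(t_n)=Y_n$ and the piecewise-constant $\hat Y(s)\equiv Y_n$ on $[t_n,t_{n+1})$; since $g$ is globally Lipschitz the integrands are adapted and no interpolation of the Brownian path is required. Applying \Ito's formula to $\|X(t)-\bar Y(t)\|^2$, the drift contribution is bounded via \eqref{eq:osLcond} after splitting $X-\bar Y=(X-\hat Y)+(\hat Y-\bar Y)$ and isolating the taming correction, the diffusion contribution by the Burkholder--Davis--Gundy inequality and \eqref{eq:gLcond}, and the remaining cross- and gap-terms by Young's inequality together with the polynomial bound \eqref{eq:fbound} and the moment bounds; the one-step interpolation gap satisfies $\expect{\|\bar Y(s)-\hat Y(s)\|^2}=O(\hmax)$. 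Gr\"onwall's inequality then gives $\expect{\sup_{s\le T}\|X(s)-\bar Y(s)\|^2}\le C\hmax$, and in particular the stated bound at $t=T$.

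I expect the main obstacle to be the moment bound, and within it the minimum-step (tamed) contribution: unlike interior and capped steps, where \eqref{eq:normfBound} and $h_{n+1}\le\hmax$ make every term of order $h_{n+1}$, a minimum step only guarantees $\hmin\|\hat f(Y_n)\|\le1$, so the term $h_{n+1}^2\|\hat f(Y_n)\|^2$ is a priori $O(1)$ rather than $O(\hmin)$ and the naive recursion does not close. Since a minimum step is an exact step of the drift-tamed method \eqref{eq:TE}, I would control these steps by importing the moment estimates of \cite{HJK2012} (\cref{thm:tamedConv}) to rule out accumulation. A secondary difficulty is ensuring that the one-sided Lipschitz estimate in the error analysis survives both the taming correction and the replacement of $X-\bar Y$ by $X-\hat Y$; both discrepancies are of the right order only because the higher moments established above are available to absorb the polynomial growth of $f$.
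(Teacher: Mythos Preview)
Your route is genuinely different from the paper's. You propose the two–stage programme ``moment bounds on $\{Y_n\}$ (via a supermartingale in place of discrete Gr\"onwall) $+$ HMS2002–style error analysis with continuous interpolation and It\^o's formula''. The paper instead runs a purely discrete error analysis on $E_n:=Y_n-X(t_n)$ and \emph{never} proves a moment bound on the numerical solution. Its central device is to use the admissibility condition \eqref{eq:normfBound} inside the error recursion itself: from $\|Y_n\|^2\le 2\|E_n\|^2+2\|X(t_n)\|^2$ one gets
\[
h_{n+1}^2\|f(Y_n)-f(X(t_n))\|^2\le 4R_2h_{n+1}^2\|E_n\|^2+\bigl(\text{polynomial in }\|X(t_n)\|\bigr),
\]
so the non-Lipschitz drift contribution splits into a term absorbed by the Gr\"onwall recursion for $\mathbb{E}\|E_n\|^2$ and a term controlled by the moments of the \emph{exact} solution (\cref{lem:boundedMomentsSDE}). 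The Taylor remainders $R_f,R_g$ are estimated by a separate lemma in conditional form, and the random mesh is handled by discrete Gr\"onwall together with $Nh_{\min}\le T$ and $h_{\max}=\rho h_{\min}$, yielding a constant of the form $e^{\rho T\Gamma_2}$.

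What each approach buys: the paper's direct route neatly sidesteps the obstacle you correctly flag as the hardest part of your plan, namely the $h_{n+1}=h_{\min}$ tamed steps in the moment-bound argument. Because the paper does not need $\sup_n\mathbb{E}\|Y_n\|^{2p}<\infty$, it can reduce (invoking \cref{thm:tamedConv}) to the event on which every step is interior and \eqref{eq:normfBound} applies throughout. Your approach is more modular and yields numerical moment bounds as a by-product; your supermartingale device would also give a constant $e^{KT}$ independent of $\rho$, which is sharper than the paper's. But ``importing the moment estimates of \cite{HJK2012}'' to close your recursion on the tamed steps is a genuine sub-project for a mixed adaptive/tamed scheme rather than a citation, and the HMS2002 error machinery would still need adaptation to the random mesh---whereas the paper avoids both by pushing admissibility directly into the error inequality.
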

The proof of \cref{thm:adaptConv} is a modification of a standard Euler-Maruyama convergence argument accounting for the properties of the random sequences $\{t_n\}_{n\in \mathbb{N}}$ and $\{h_n\}_{n\in \mathbb{N}}$, and using \eqref{eq:normfBound} to compensate for the non-Lipschitz drift. It is presented in Section \ref{sec:proofs}.

It is possible to link the notion of admissibility to the strategies
developed 
via taming and local dynamics in Sections
\ref{sec:hTaming} and \ref{sec:locdyn} as follows.
The adaptive timestepping strategy given by \eqref{eq:stepAdaptTame} is admissible for an appropriate choice of tolerance $\varepsilon$: to see this, let  
\begin{equation*}
0\leq \varepsilon < \frac{\hmax^2}{1+\hmax}.
\end{equation*}
Then \eqref{eq:stepAdaptTame} is equivalent to the strategy given in
Part (i) of \cref{lem:normfBound}, with 
\[
\delta:=\frac{\varepsilon+\sqrt{\varepsilon^2+4\varepsilon}}{2}\leq h_{\text{max}}.
\]
We investigate performance of \eqref{eq:stepAdaptTame} numerically in Section \ref{sec:examples}.

The adaptive timestepping strategy given by
\eqref{eq:stepAdaptBasin} in Section \ref{sec:locdyn} is equivalent
to that given in Part (iii) of \cref{lem:normfBound} when the
drift coefficient is precisely the polynomial expression on the right hand side of
\eqref{cond:driftsuperlin}, and \eqref{eq:stepAdaptBasin} is
therefore admissible in that case. For more general drift
coefficients the closest correspondence is with Part (iv) of 
\cref{lem:normfBound}, for which a priori knowledge of the polynomial
bound parameter $c$ is needed; in practice this may be difficult to
determine. The variant given by \eqref{HL2}, which uses the norm of the Jacobian of $f$, is not known to be admissible but neither does it require precise knowledge of $c$, and we investigate it numerically in Section \ref{sec:examples}.
\begin{remark}\label{rem:FangGiles}
In \cite{Giles2017}, an adaptive timestepping strategy is presented which satisfies
\begin{equation}\label{eq:FGadapt}
\langle Y_n,f(Y_n) \rangle+\frac{1}{2}h_{n+1}\|f(Y_n)\|^2\leq\alpha\|Y_n\|^2+\beta,\quad n=0,\ldots,N-1,
\end{equation}
where the one sided linear bound
$\langle x,f(x) \rangle \leq \alpha\|x\|^2+\beta,$
for $\alpha,\beta>0$, has been imposed upon the drift coefficient $f$. With additional upper and lower bounds on each timestep, and the introduction of a convergence parameter $\delta\leq 1$, the authors show that the Euler-Maruyama scheme is strongly convergent with order $1/2$. 

We note that, in Section 3.1 of \cite{Giles2017}, specific timestepping rules are proposed for two scalar equations with drift satisfying a polynomial bound of the form \eqref{cond:driftsuperlin} for large arguments: the stochastic Ginzburg Landau equation and the stochastic Verhulst equation. These rules are consistent with the adaptive timestepping strategy given by \eqref{eq:stepAdaptBasin}.  Similarly, in Section 3.2, two specific timestepping rules for multi-dimensional SDEs are proposed, the first of which, within our framework, corresponds to Part (iii) of \cref{lem:normfBound}. The second of those rules, within our framework, corresponds to
\[
h_{n+1}\leq \delta\frac{\|Y_n\|^2}{\|f(Y_n)\|^2}.
\]
If we suppose that $\delta\leq h_{\text{max}}$ then we have
\[
\|f(Y_n)\|^2\leq \frac{\delta}{h_{n+1}}\|Y_n\|^2\leq\rho\|Y_n\|^2,
\]
which is admissible for \eqref{eq:Scheme}.
\end{remark}

\section{Numerical examples}\label{sec:examples}
In the numerical experiments below we compare two different
adaptive time-stepping strategies for \eqref{eq:Scheme} with the fixed step drift-tamed
Euler-Maruyama scheme \eqref{eq:TE}. 
For the latter we take as the fixed step $h_{\text{mean}}$ the
average of all timesteps $h_n^{(m)}$ over each path and each realisation
$m=0,1,\ldots,M$ so that 
$$\hmean=\frac{1}{M} \sum_{m=1}^M \frac{1}{N^{(m)}}\sum_{n=1}^{N^{(m)}} h_{n}^{(m)}.$$
Thus we are comparing to a fixed step scheme of similar average cost.
We solve \eqref{eq:Scheme} with the taming inspired
adaptive timestepping strategy \eqref{eq:stepAdaptTame} and denote this \EAT. The fixed step
comparison using $h_{\text{mean}}$ computed from \EAT is denoted \EFT.
Similarly, we solve \eqref{eq:Scheme} with the local dynamics inspired
adaptive timestepping scheme \eqref{HL2} which we denote \EALD and the fixed step
comparison is denoted \EFLD. 

\subsection{A stochastic Ginzburg Landau equation}
This equation arises from the theory of superconductivity and takes the form 
\begin{equation}\label{eq:SGLE}
dX(t)=\left(\left(\eta+\frac{1}{2}\sigma^2\right)X(t)-\lambda X(t)^3\right)dt+\sigma G(X(t))dW(t),\quad X(0)=x_0>0,
\end{equation}
for $t\geq 0$, and where $\eta\geq 0$ and $\lambda,\sigma>0$. 
When $G(X)=X$, the explicit form of the solution over $[0,\infty)$,
provided by Kloeden \& Platen~\cite{KP}, is 
\begin{equation}\label{eq:SGLE:sol}
  X(t)=\frac{x_0\exp(\eta t+\sigma W(t))}{\sqrt{1+2x_0^2\lambda\int_{0}^{t}\exp(2\eta s+2\sigma W(s))ds}},\quad t\geq 0.
\end{equation}
We use this exact solution to illustrate numerically the strong
convergence result of \cref{thm:adaptConv},
see \cref{fig:SGLE:RSC}, computing to a final time of $T=2$ with
$100$ realisations. We
compare in \cref{fig:SGLE:RSC} (a) all four methods \EAT, \EFT, \EALD 
and \EFLD and show reference lines of slope $1$ and $1/2$.
Note that the global error of the adaptive methods at time $T$ is
close to that computed with the mean step $\hmean$ by the fixed step method.
In (b) we show comparison of estimated rates of strong convergence and
root mean square error (RMS) error against the CPU time between the
adaptive methods \EAT, \EALD and the fixed step tamed Euler methods
\EFT, \EFLD. We see there is a slight computational overhead in
performing the adaptive step which is expected.
\begin{figure}
  \begin{center}
    (a) \hspace{0.49\textwidth} (b)
    \includegraphics[width=0.49\textwidth]{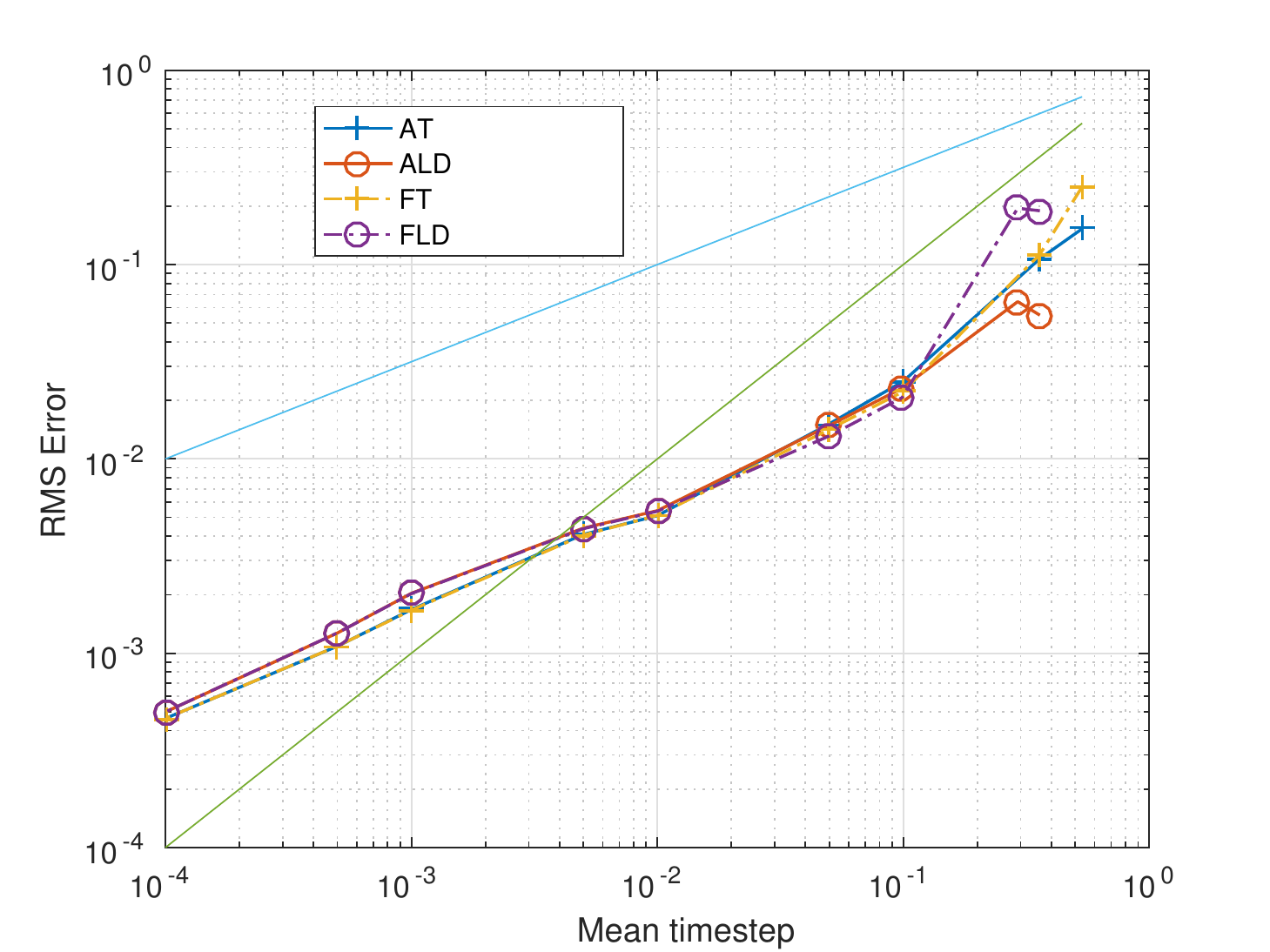}
    \includegraphics[width=0.49\textwidth]{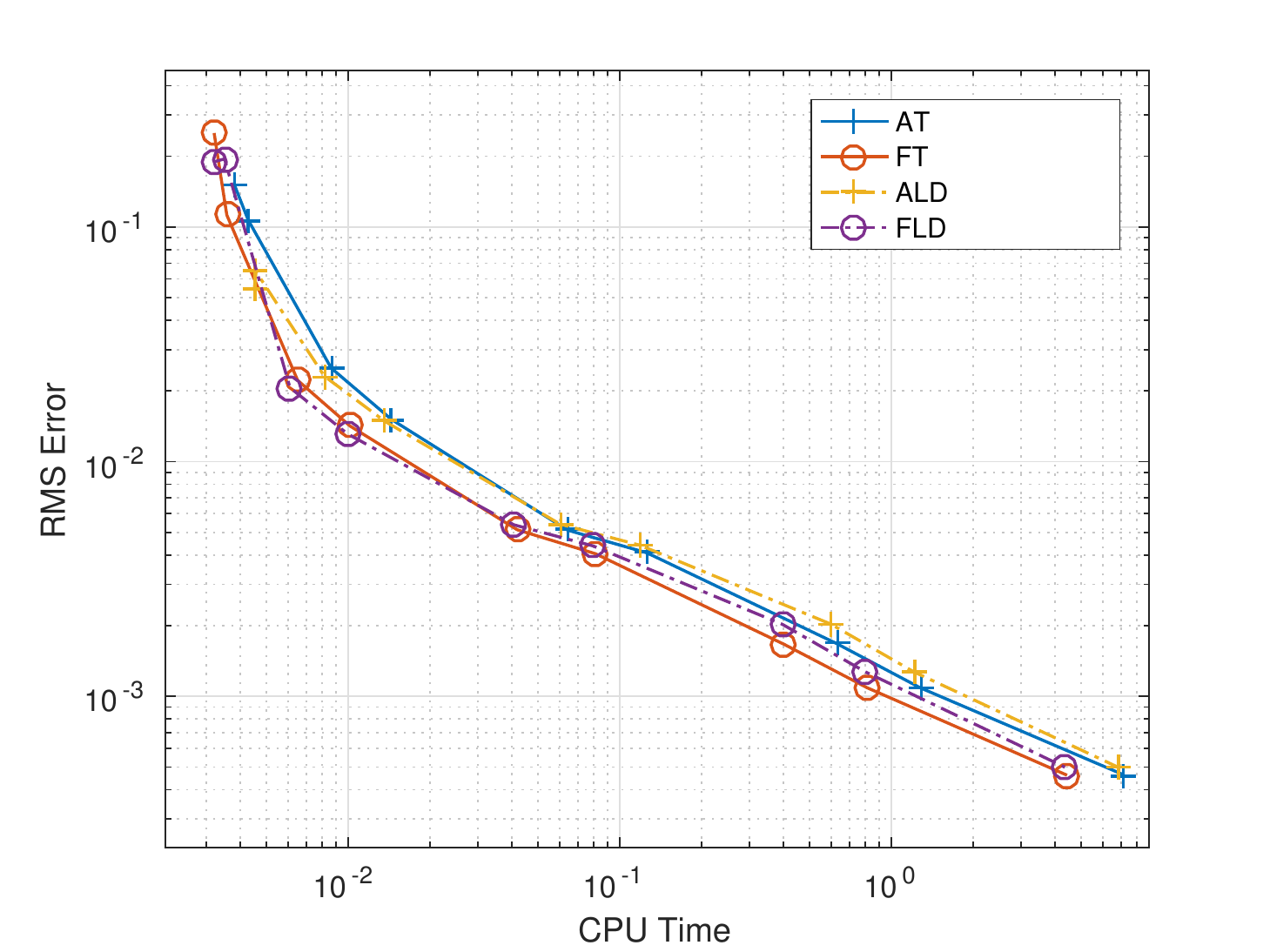}
    \caption{{\bf (a)}: A numerical demonstration of
      strong convergence for multiplicative noise as the mean stepsize decreases for the
      adaptive methods \EAT, \EALD and fixed step methods \EFT, \EFLD
      applied to Eq. \eqref{eq:SGLE}
      with parameters $\eta=0.1$, $\lambda=2$ and $\sigma=0.5$. $T=2$.
      {\bf (b)} plot showing the 
      efficiency and reduction in root mean square (RMS) error as the CPU
      time (s) increases and $\hmax$ decreases. For each $\hmax$ value
      $\rho=100$.} 
    \label{fig:SGLE:RSC} 
  \end{center}
\end{figure}
In \cref{fig:SGLaddConv} we examine convergence for \cref{eq:SGLE}
with additive noise (taking $G(X)=1$). As we do not have an exact
solution we use a reference solution computed with $h=10^{-5}$ using
\eqref{eq:TE}. We observe, as for a standard Euler-Maruyama method, an
improvement in the rate of convergence for the adaptive methods
\EAT, \EALD as well as the fixed step schemes \EFT and \EFLD.
Comparing with $\hmean$ leads to similar errors and we again note a
slight computational overhead to account for the adaptive step in the algorithm.
\begin{figure}
  \begin{center}
    (a) \hspace{0.49\textwidth} (b)
    \includegraphics[width=0.49\textwidth]{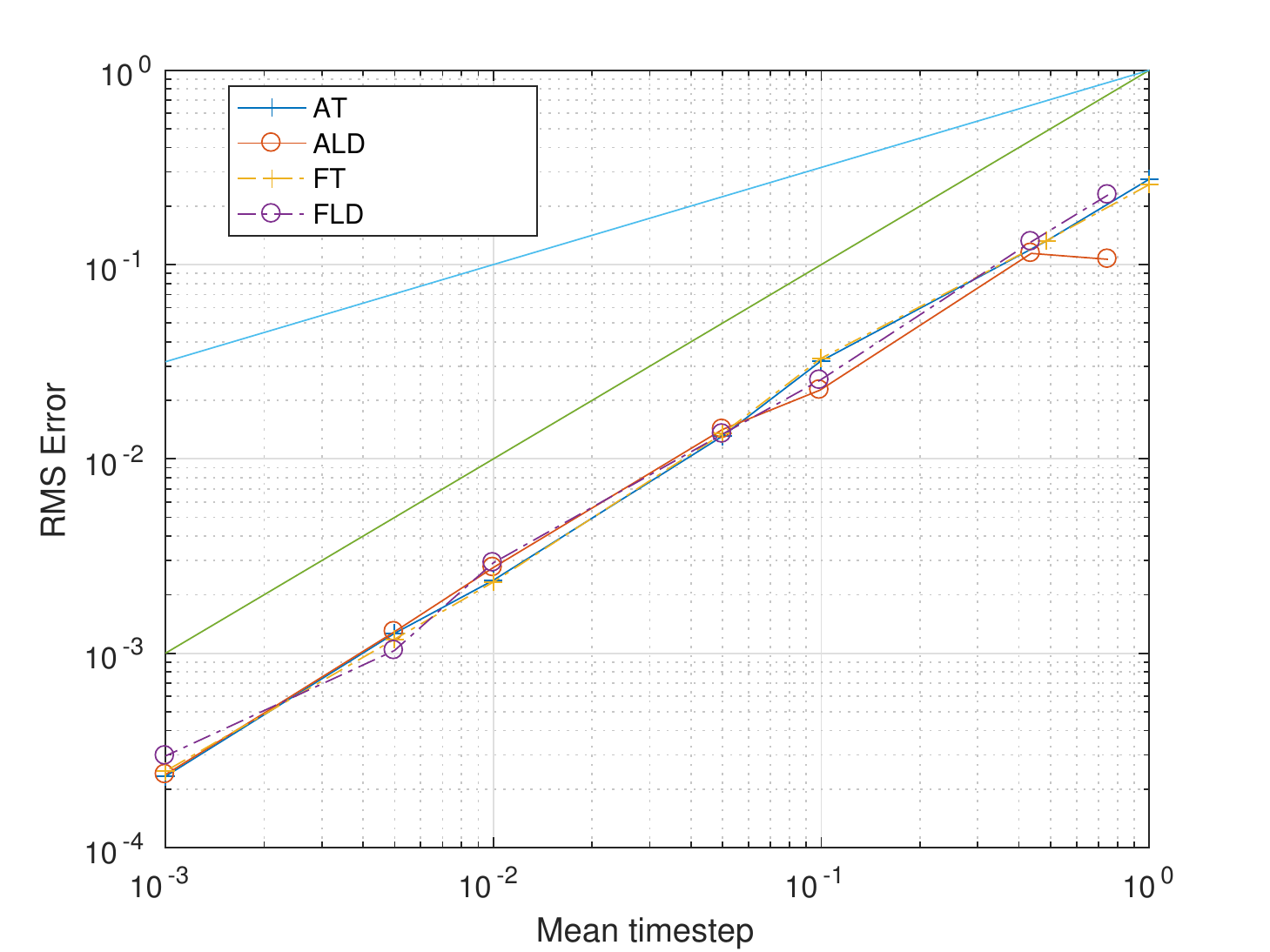}
    \includegraphics[width=0.49\textwidth]{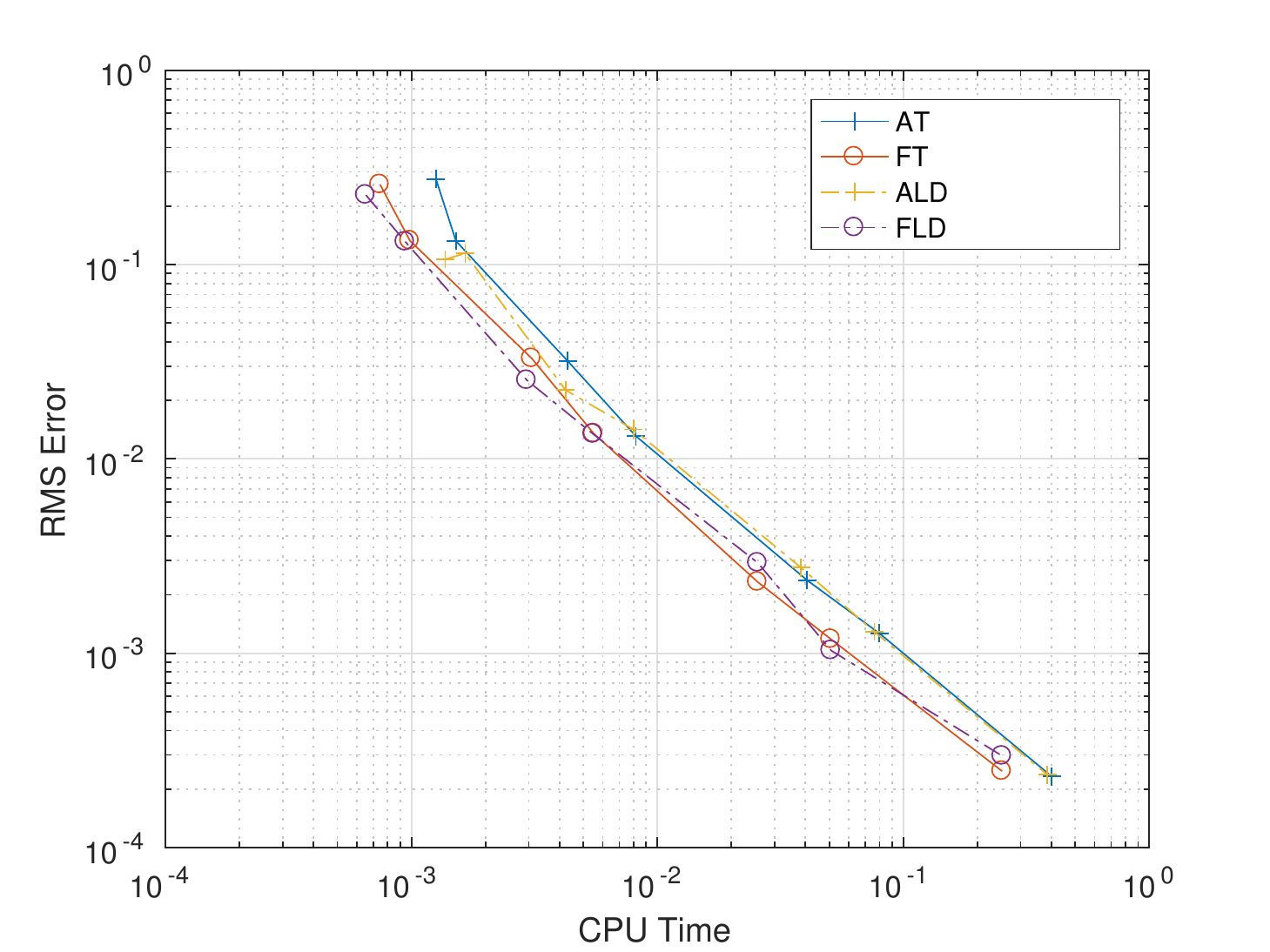}
    \caption{{\bf (a)}: A numerical demonstration of
      strong convergence  for addititive noise as the mean stepsize decreases for the
      adaptive methods \EAT, \EALD and fixed step methods \EFT, \EFLD
      applied to Eq. \eqref{eq:SGLE}
      with parameters $\eta=0.1$, $\lambda=2$ and $\sigma=0.5$. $T=2$.
      {\bf (b)} plot showing the 
      efficiency and reduction in root mean square (RMS) error as the CPU
      time (s) increases and $\hmax$ decreases. For each $\hmax$ value
      $\rho=100$.} 
    \label{fig:SGLaddConv} 
  \end{center}
\end{figure}

\subsection{The stochastic Van der Pol oscillator}
\label{sec:vdpol}
This is a stochastic additive noise version of the van der Pol
oscillator, which describes the effect of external noise on stable
oscillations, and takes the form 
\begin{equation}\label{eq:SVDP}
  d\begin{pmatrix}X_1(t)\\X_2(t)
  \end{pmatrix}=
  \begin{pmatrix}
    X_2(t)\\
    (1-(X_1(t))^2)X_2(t)- X_1(t)
  \end{pmatrix}dt
  +\begin{pmatrix}
    0\\
    dW(t)
  \end{pmatrix}.
\end{equation}
In \cref{fig:vdpolPER_TRUE} we show two realisations for
\eqref{eq:SVDP} obtained using the drift-tamed Euler--Maruyama scheme
\eqref{eq:TE} with $h=10^{-4}$. We clearly see periodic behaviour over
the interval $[0,T]$. We ask how well the period is captured by
the adaptive methods \EAT and \EALD and by the fixed step methods \EFT
and \EFLD. \cref{fig:vdpolPER_COMP} compares two 
realisations computed using the same paths for $W(t)$, so that the
path in (a) is the same as that in (b) (similarly for (c) and (d). We observe
that the fixed step methods \EFT and \EFLD in (b) and (d) appear to
have fewer oscillations than the adaptive simulations in (a) and (c)
(and \cref{fig:vdpolPER_TRUE} (a)).
\begin{figure}
  \begin{center}
    (a) \hspace{0.49\textwidth} (b)
    \includegraphics[width=0.48\textwidth]{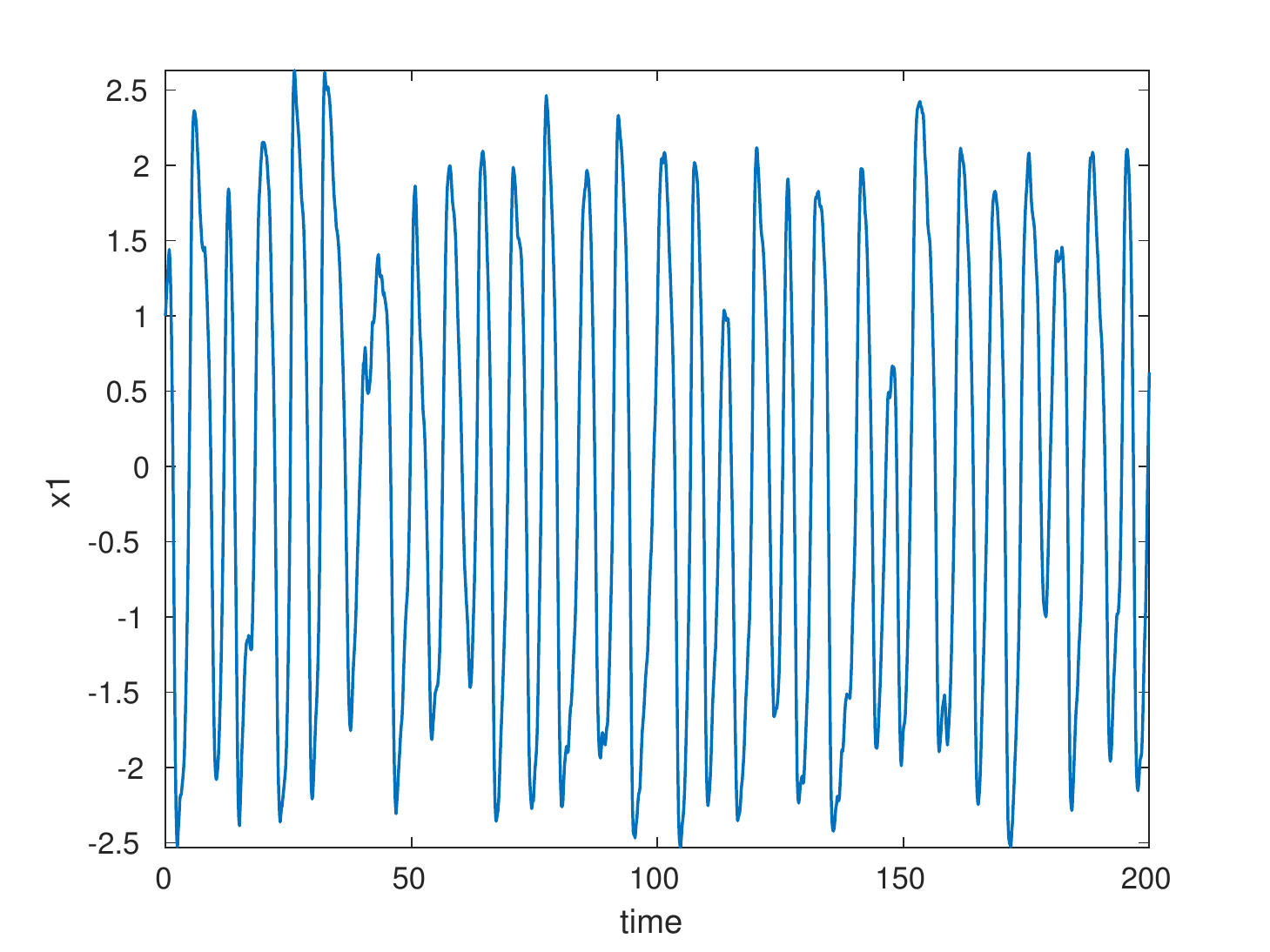}
    \includegraphics[width=0.48\textwidth]{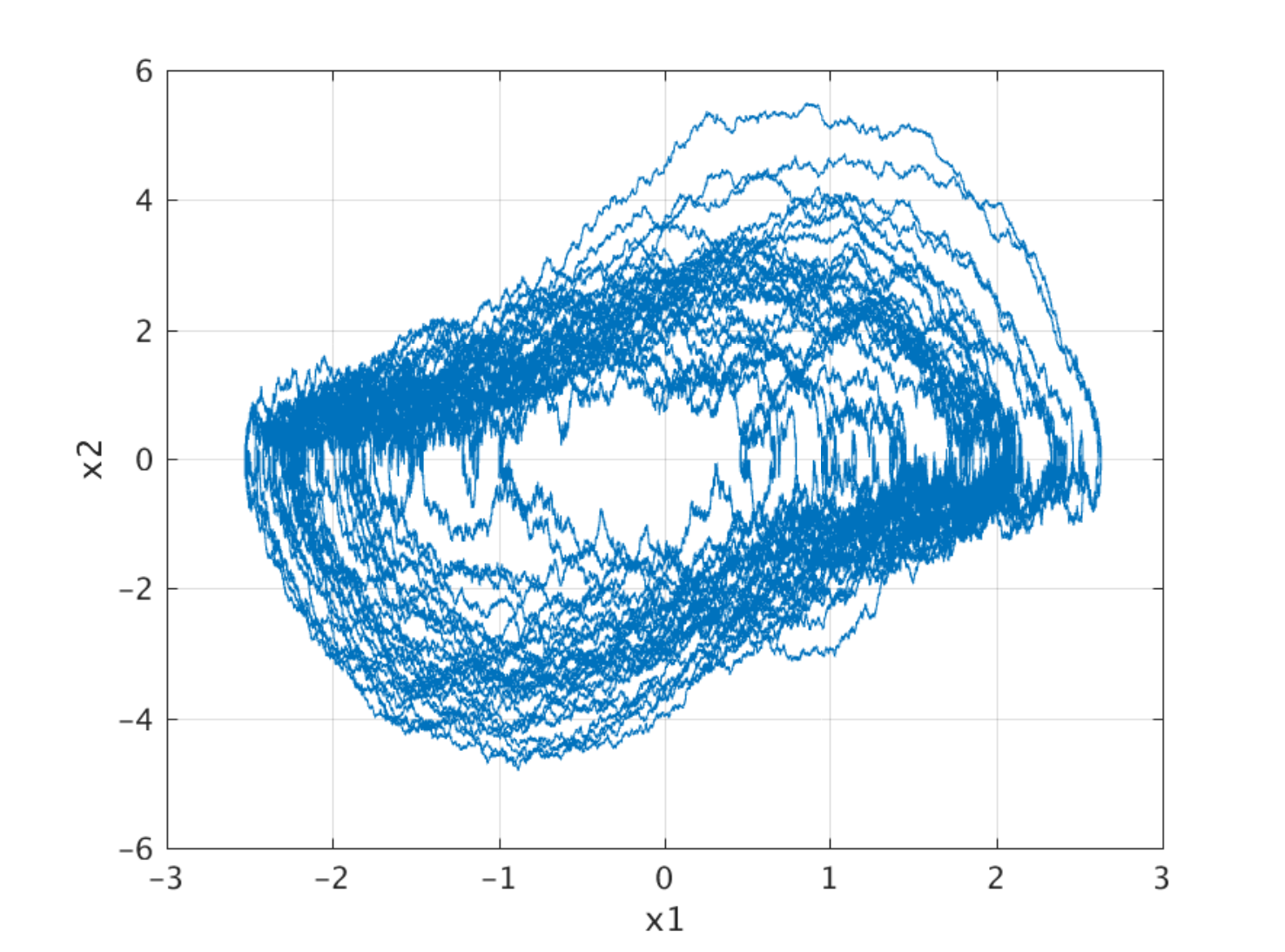}
    \caption{A single realisation of \eqref{eq:SVDP} obtained using
      \eqref{eq:TE} with $h=10^{-4}$. (a) shows $X_1(t)$ against time
      and (b) the phase portrait.} 
    \label{fig:vdpolPER_TRUE}
  \end{center}
\end{figure}

\begin{figure}
  \begin{center}
    (a) \hspace{0.49\textwidth} (b)
    \includegraphics[width=0.48\textwidth]{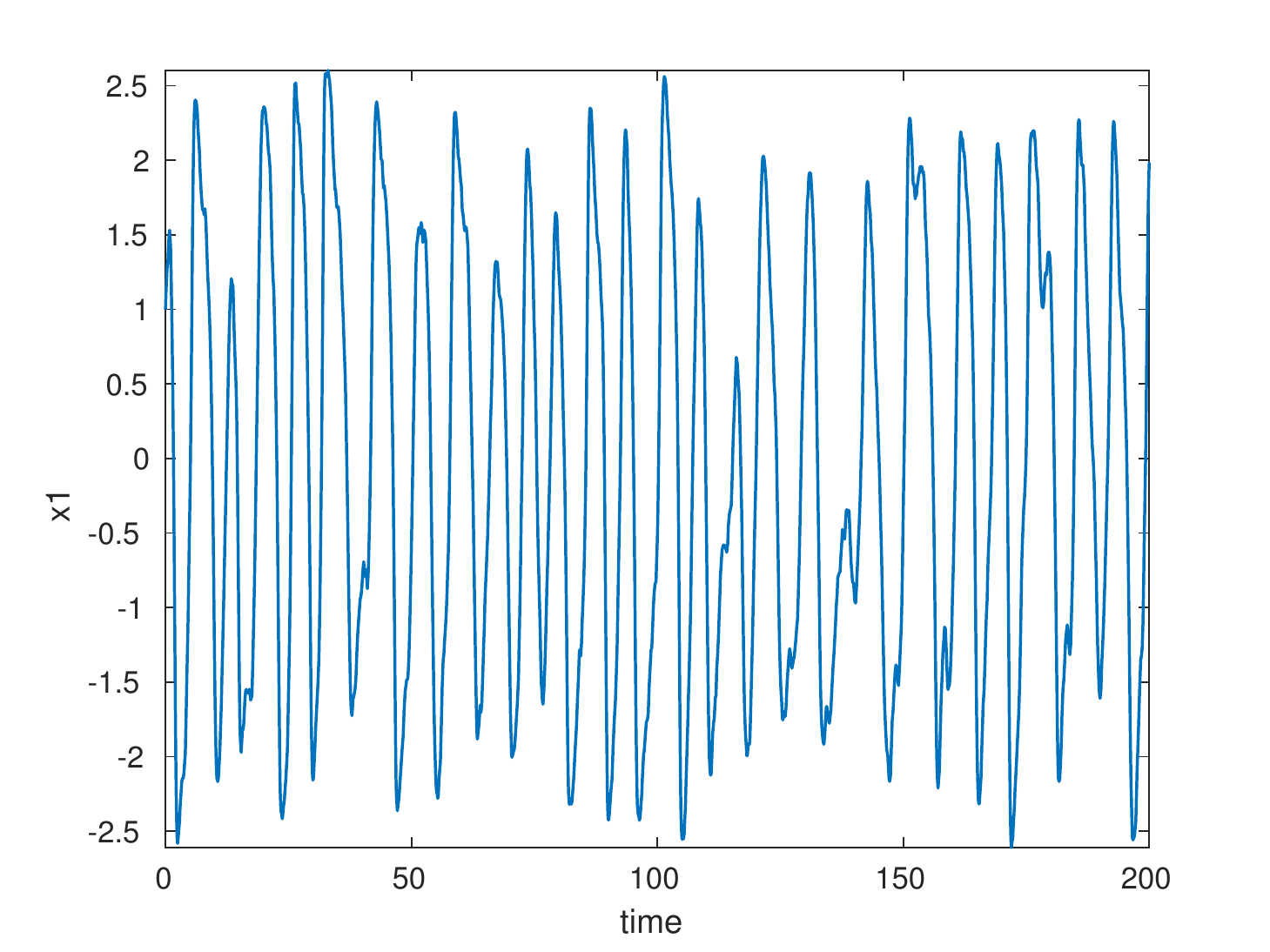}
    \includegraphics[width=0.48\textwidth]{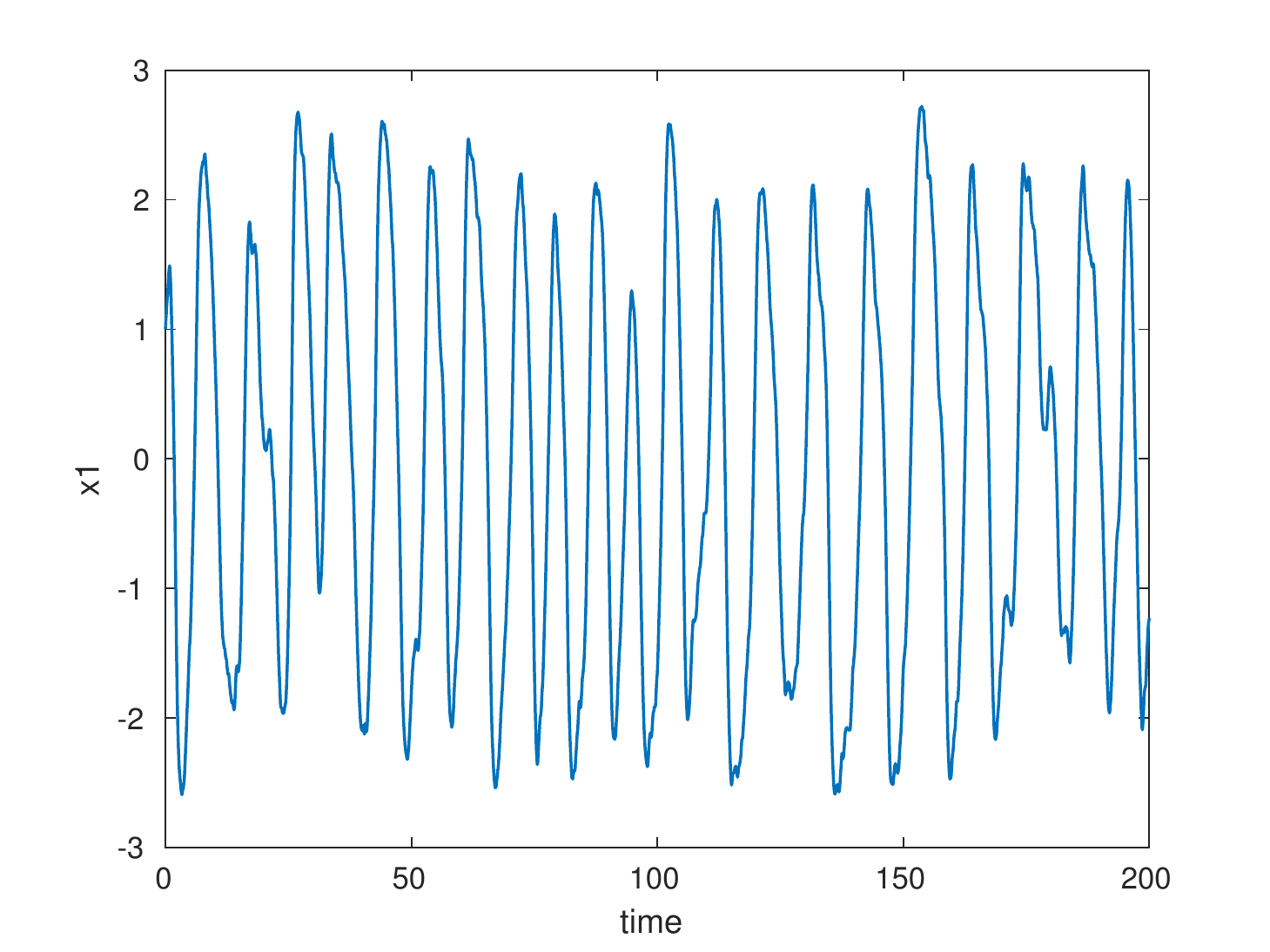}\\
    (c) \hspace{0.49\textwidth} (d)
    \includegraphics[width=0.48\textwidth]{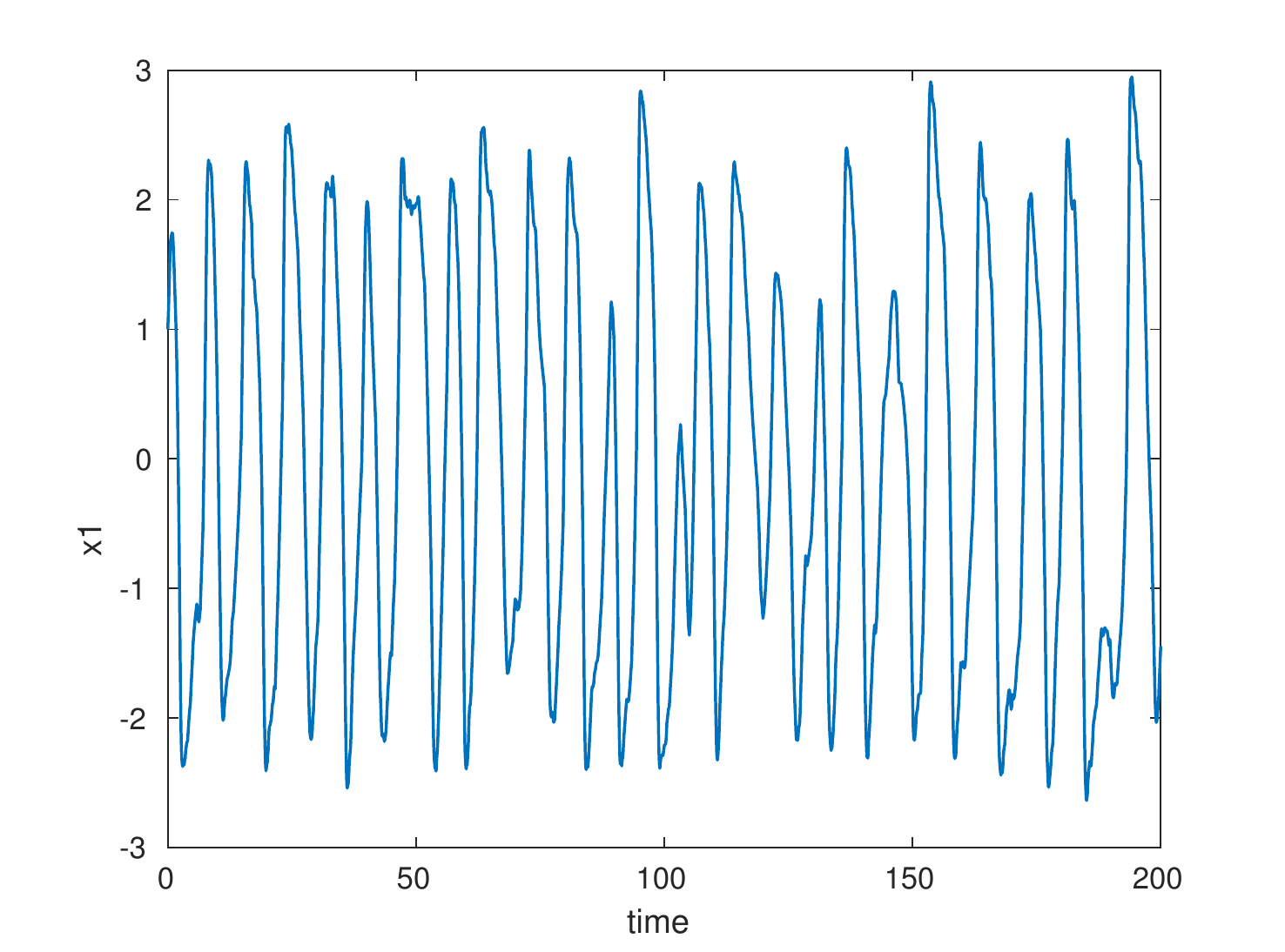}
    \includegraphics[width=0.48\textwidth]{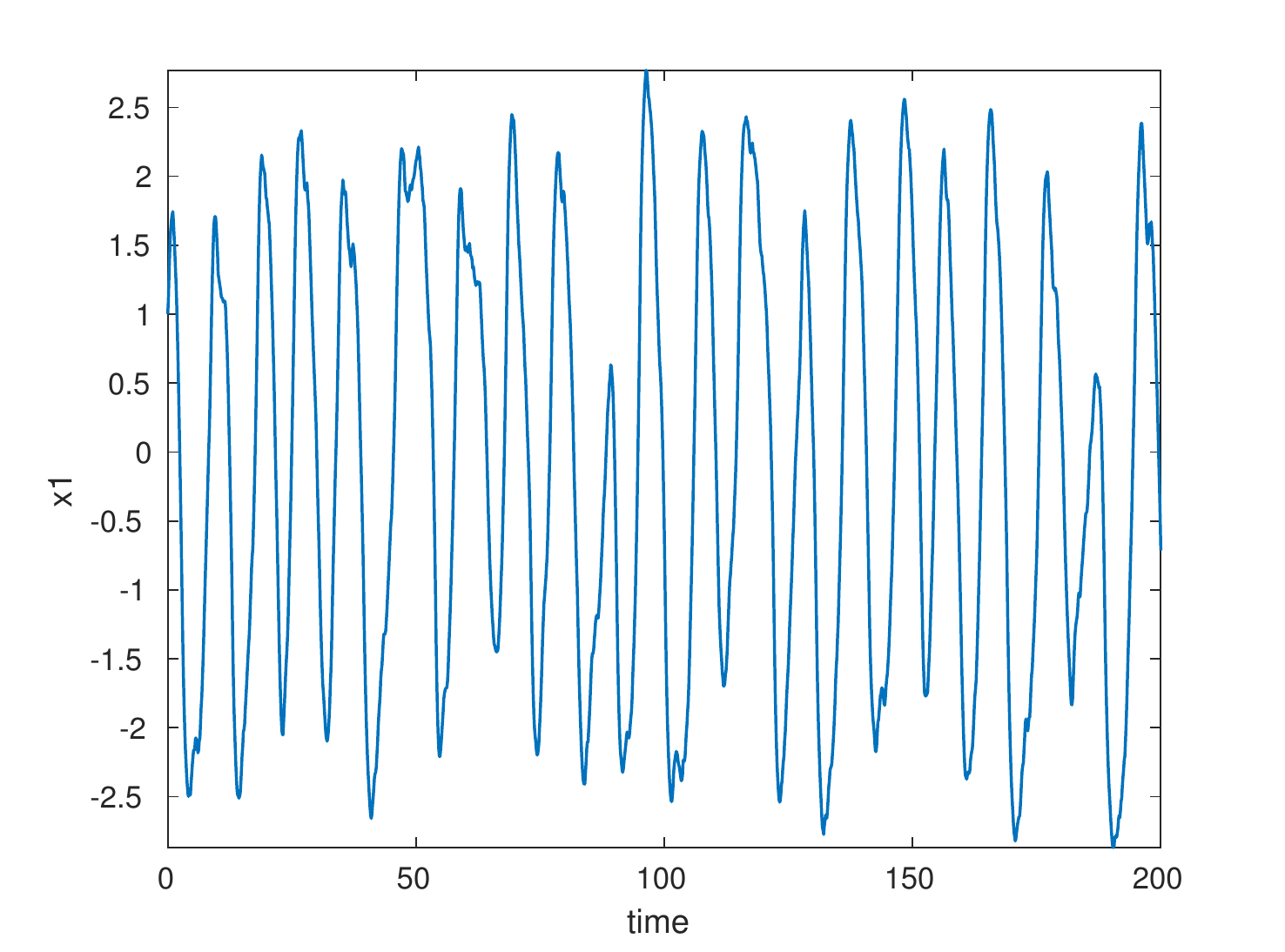}
    \caption{Sample realisations to \eqref{eq:SVDP} obtained using \EAT (a)
      compared to $h=0.0838$ for \EFT in (b). In (c) we use \EALD and
      compare to \EFLD with $h=0.1269$ in (d). Here $\rho=100$ and
      $\hmax=1$. Note that the paths in (a) and (b) (and (c) and (d))
      are the same and the difference arises from the timestepping.}
    \label{fig:vdpolPER_COMP}
  \end{center}
\end{figure}
This is borne out in \cref{tab:periods} which compares data on
the estimated mean period and variance from $100$
realisations of \eqref{eq:SVDP} for $t\in[0,100]$. We also include maximum and
minimum periods observed.  The adaptive methods \EAT and \EALD both give a
better estimate of the period than the equivalent fixed step methods
and have a smaller relative error. 
We also note that \EAT uses, on average, smaller steps than \EALD and
has a smaller relative error. For the equivalent fixed step schemes
\EFT and \EFLD the error for these different timesteps are similar.

\begin{table}
  \begin{tabular}{ | l | c | c |c |c|c|c|}    \hline 
  & Rel. Error & Mean Period & Var &  Min & Max  & h 
 \\ \hline 
TE \eqref{eq:TE} & & 6.684832 & 0.294930 & 5.555556 & 9.090909 & 0.0005 \\ 
\EAT &
0.089037 & 7.355539 & 0.484346 & 5.882353 & 9.090909 &  \\ 
\EFT &
0.213953 & 8.543185 & 0.808813 & 7.142857 & 11.111111 & 0.080635 \\ 
\hline 
TE \eqref{eq:TE}& & 6.725343 & 0.250395 & 5.555556 & 8.333333 & 0.0005 \\ 
\EALD & 
0.183946 & 8.368017 & 1.750757 & 6.250000 & 14.285714 & \\ 
\EFLD & 
0.279599 & 9.394958 & 1.132636 & 7.142857 & 14.285714 & 0.120965 \\ 
\hline 
  \end{tabular}
  \caption{Comparison for the van der Pol equation \eqref{eq:SVDP} of
    estimated mean period, 
    variance, minimum period and maximum period based on $100$
    realisations with $\rho=100$, $\hmax=1$ and $T=100$. We also report
    an estimate of the relative error in the mean period. 
  } 
  \label{tab:periods}
\end{table}
In \cref{tab:stepsVDPOL} we examine for $T=200$ the timesteps $h_n$
taken by \EAT and \EALD for different values of $\rho$ with $\hmax=2$.
We report $h_{\text{mean}}$, along with the timestep variance, the minimum and maximum timesteps,
the computational time taken, and the percentage of timesteps taken
at the minimum $\hmin$. We see that for $\rho$ large enough $\hmin$ is not reached often and the frequency with which this occurs for $\rho=100$ (where $\hmin=0.02$) is
similar to that for $\rho=1000$ (where $\hmin=0.002$). 
\begin{table}
  \begin{tabular}{|l|c|c|c|c|c|c|c|} \hline 
 & $\rho$ & $\hmean$ & Var $h_n^{(m)}$  &  Min $h_n^{(m)}$ & Max $h_n^{(m)}$  & cpu (s)& \% Min\\ 
 \hline 
\EAT & $1000$ &
0.080644 & 0.004299 & 0.012224 & 0.823429 & 1.068599 &0.000000 \\ 
\EAT& $100$ &
0.081051 & 0.004195 & 0.020000 & 0.798538 & 1.071684 &6.064423 \\ 
\EAT& $10$ &
0.207301 & 0.001526 & 0.200000 & 0.820070 & 0.461500 &89.625417 \\ 
\hline 
\EALD& $1000$ &
0.122547 & 0.008976 & 0.022985 & 0.499999 & 0.663723 &0.000000 \\ 
\EALD& $100$ &
0.121729 & 0.008901 & 0.023002 & 0.499999 & 0.671226 &0.087541 \\ 
\EALD& $10$ &
0.220997 & 0.004053 & 0.200000 & 0.499998 & 0.413096 &83.616568 \\ 
\hline 
  \end{tabular}
  \caption{Comparison of step size data $h_n$ for the stochastic Van der Pol
    equation \eqref{eq:SVDP} with additive noise based on $100$
    realisations.
    See \cref{tab:stepsLang} for an example with multiplicative noise.} 
  \label{tab:stepsVDPOL}
\end{table}

\subsection{A Langevin equation}
The following example is taken from \cite{LMS2006}:
\begin{equation}\label{eq:LE}
  \begin{split}
    dX_1(t)&=X_2(t)dt\\
    dX_2(t)&=-\left[\frac{1}{2}X_2(t)\left(\frac{4(5X_1(t)^2+1)}{5(X_1(t)^2+1)}\right)^2\right]dt+\frac{4(5X_1(t)^2+1)}{5(X_1(t)^2+1)}dW(t).
  \end{split}
\end{equation}
We take $X(0)=[1,1]^T$ and solve to $T=20$ with $\hmax=2$. We now
examine the choice of $\rho$. In \cref{tab:stepsLang} we give the
mean step $\hmean$, variance, minimum and maximum step, computational time
and the percentage of steps that were at $\hmin$. Note that both
$\hmax$ and $\hmean$ is larger for \EAT than \EALD (and we see a smaller
computational time). 
In \cref{fig:langPercent} we plot the percentage of the number of
steps taken at the minimum step size as $\rho$ is increased for \EAT
and \EALD. We see that for small $\rho$ the minimum step $\hmin$ is
reached with a high probability ($1$ when $\rho=1$). As $\rho$ is
increased for both schemes the minimum step is no longer reached (at
$\rho=10^{3}$ for \eqref{HL2} and $\rho=10^{4}$ for  \eqref{eq:tol}).
This illustrates that the time adaptivity is actively controlling the
dynamics (and we are not at the minimum step at each iteration).
Although from  \cref{fig:langPercent} we can not see that \EAT or
\EALD takes larger or smaller steps we can see that the step size
choice is different and that the variance is smaller for \EALD
(and in some situations it may be preferable not to have large
switches in stepsize).
\begin{table}
  \begin{tabular}{|l|c|c|c|c|c|c|c|}    \hline 
 & $\rho$ & $\hmean$ & Var $h_n^{(m)}$  &  Min $h_n^{(m)}$ & Max
                                                                 $h_n^{(m)}$
    & cpu & \% Min\\ 
 \hline 
\EAT & $1000$ &
0.084782 & 0.025490 & 0.011188 & 1.487107 & 0.104582 &0.000000 \\ 
\EAT& $100$ &
0.085909 & 0.024528 & 0.020000 & 1.396040 & 0.114417 &19.373834 \\ 
\EAT& $10$ &
0.237467 & 0.023662 & 0.200000 & 1.118137 & 0.040255 &87.013139 \\ 
\hline 
\EALD& $1000$ &
0.012929 & 0.001416 & 0.002009 & 0.464278 & 0.790780 &12.960151 \\ 
\EALD& $100$ &
0.035592 & 0.003088 & 0.020000 & 0.468432 & 0.288227 &74.265358 \\ 
\EALD& $10$ &
0.212098 & 0.002462 & 0.200000 & 0.446987 & 0.046083 &93.070037 \\ 
\hline 
  \end{tabular}
  \caption{Comparison of step size data $h_n$ for the Langevin
    equation \eqref{eq:LE} with multiplicative noise based on $100$
    realisations.
    See \cref{tab:stepsVDPOL} for an example with additive noise.} 
  \label{tab:stepsLang}
\end{table}

\begin{figure}
  \begin{center}
  \includegraphics[width=1.0\textwidth,height=0.175\textheight]{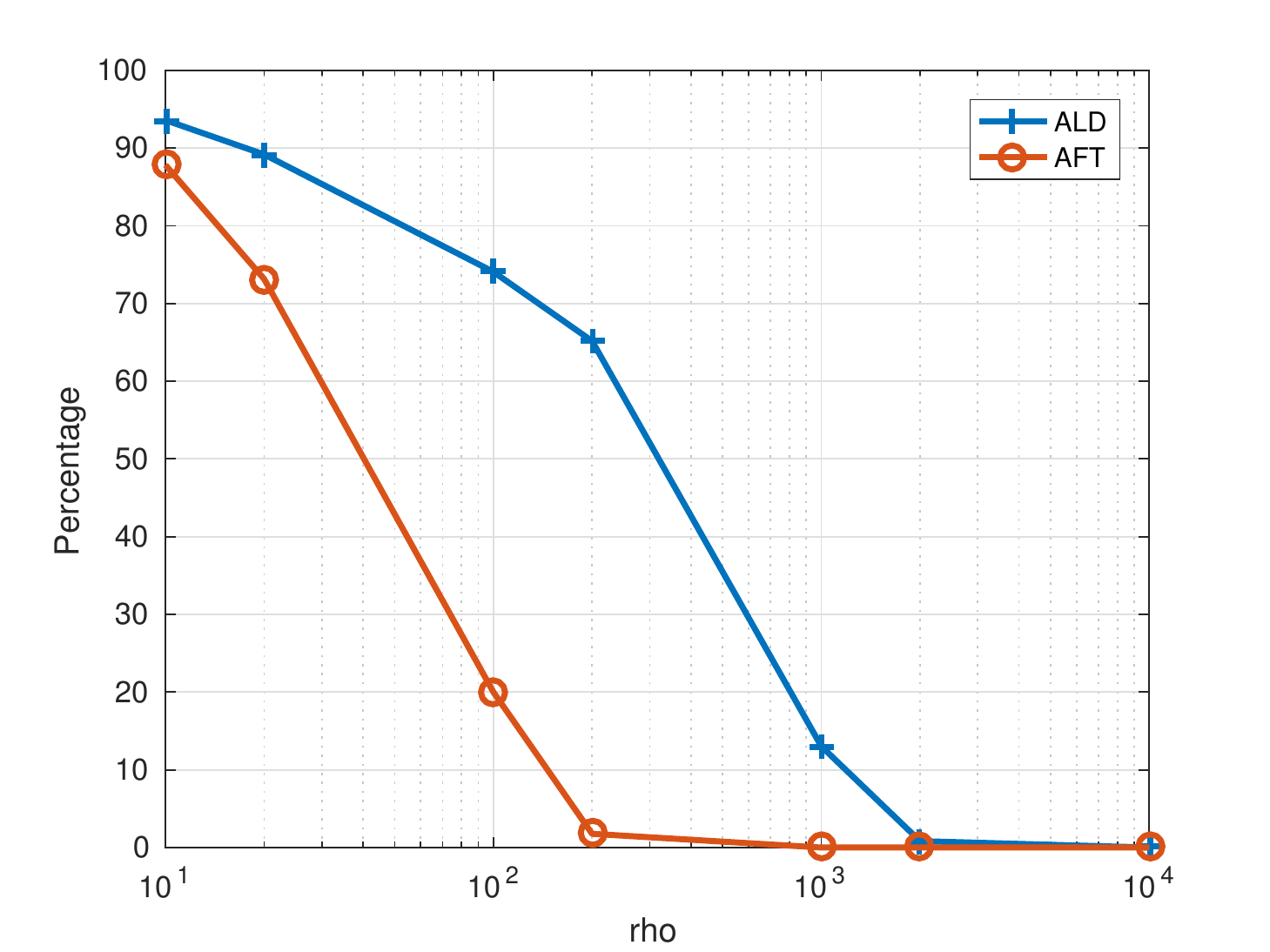}
  \caption{As $\rho$ increases, the percentage of steps taken at the
    smallest step $\hmin$ decreases for \EAT and \EALD. Here $\hmax=2$
  and so $\hmin=0.2,\ldots,0.0002$.}
\label{fig:langPercent}
\end{center}
\end{figure}

\subsection{Comparison of step sizes for 8 different problems}
Both \EAT and \EALD control growth from a non globally Lipschitz drift term.
We now compare the timestep selection made by each of these over a
range of different problems. Rather than present tables on data such
as in \cref{tab:stepsVDPOL} or \ref{tab:stepsLang} we summarize
the mean and variance in \cref{fig:8problems} for $\rho=100$ (a) and
$\rho=1000$ in (b). We include the stochastic Van der Pol oscillator
\eqref{eq:SVDP} (VdP), the Langevin equation \eqref{eq:LE} 
(Lang), and the Stochastic Ginzburg-Landau equation \eqref{eq:SGLE}
with $G(X)=1$ (additive noise) (SGLA).
The other models that we examine can also be found, for example, in
\cite{HJ2013}. Note that for certain of these models the
coefficients change randomly on each realisation.
\begin{description}
\item[SIR:] Simulation of the stochastic Susceptible, Infected,
  Recovered (SIR) model
  \begin{align*}
    dX_1(t) = &[-\alpha X_1(t) X_2(t)-\delta X_1(t)+\delta ]dt + [-\beta X_1(t)X_2(t)] dW_1(t),\\
    dX_2(t) = &[\alpha X_1(t) X_2(t)-(\gamma+\delta) X_2(t)]dt + [\beta X_1(t)X_2(t)] dW_2(t),\\
    dX_3(t) = &[\gamma X_2(t)-\delta X_3(t)]dt,
  \end{align*}
  over the simulation interval [0,T], $T=2$ with initial data $X(0) =
  [0.5;0.3;0.2]$. 
  For each simulation we take $\alpha, \beta, \gamma, \delta \sim
  U[0,10]$.
\item[LV:] Simulation of the stochastic Lokta-Volterra (LV) model in the well stirred sense
\begin{align*}
  dX_1(t) = & [X_1(t)  (\alpha - \beta  X_2(t))]dt + \sigma_1  X_1(t)  dW_1(t),\\
  dX_2(t) = & [X_2(t)  (\gamma  X_1(t)-\delta)]dt + \sigma_2  X_2(t)  dW_2(t),
\end{align*}
over the simulation interval [0,T] with $T=20$ and initial value
$X(0)=[5,10]^T$. 
The parameters $\alpha, \beta, \gamma, \delta \sim U[0,1]$ for each
realisation and $\sigma_1=\sigma_2=0.01$.

\item[PK:] Simulation of a Proto-Kinetics (PK) model.
Here $X$ represents the proportion of one form of a certain 
protein and therefore should be constrained to the interval
$[0,1]$ and can be modelled by the following SDE
\begin{align*}
  dX(t) = & \left[\frac{1}{2}-X(t)+ X(t)(1 - X(t))+\frac{1}{2}
            X(t)(1-X(t))(1-2 X(t))\right]dt  \nonumber \\ 
          & + [X(t)(1-X(t))] dW(t).
\end{align*}
We take the simulation interval to be $[0,T]$, $T=100$.

\item[2D:] Simulations of the polynomial type SDE
  $$ dX(t) = (A X(t)-\beta X(t)|X(t)|^{\nu}) dt + G dW(t),$$
  where $A, \beta, G\in \real^{J\times J}$ $X(0) = [-1 ; -1]^T$.
  We take $\nu=2$ and
  $$A=\left(\begin{matrix}0.807019 & 0.589848 \\  0.080506 &
      0.477723\end{matrix}\right), \,\,
  \beta=\left(\begin{matrix}0.99133 &   0.60672 \\  0.29234 &
      0.96434\end{matrix}\right), \,\,
  G=\left(\begin{matrix}0.5 &   0\\  0 &  0.5\end{matrix}\right).
  $$
 
 \item[CIR:] Simulation of special case of the stochastic
  Cox-Ingersoll-Ross (CIR) model
  $$ X(t) = \kappa(\theta-X)dt+ \sigma \sqrt{|X|} dW, \qquad X(0)=1$$
  over $[0,T]$, $T=200$ with $\kappa=0.1$, $\theta=0.5$ and
  $\sigma=0.5$. 

\end{description}

\begin{figure}
  \begin{center}
    (a) \hspace{0.49\textwidth} (b)
    \includegraphics[width=0.49\textwidth]{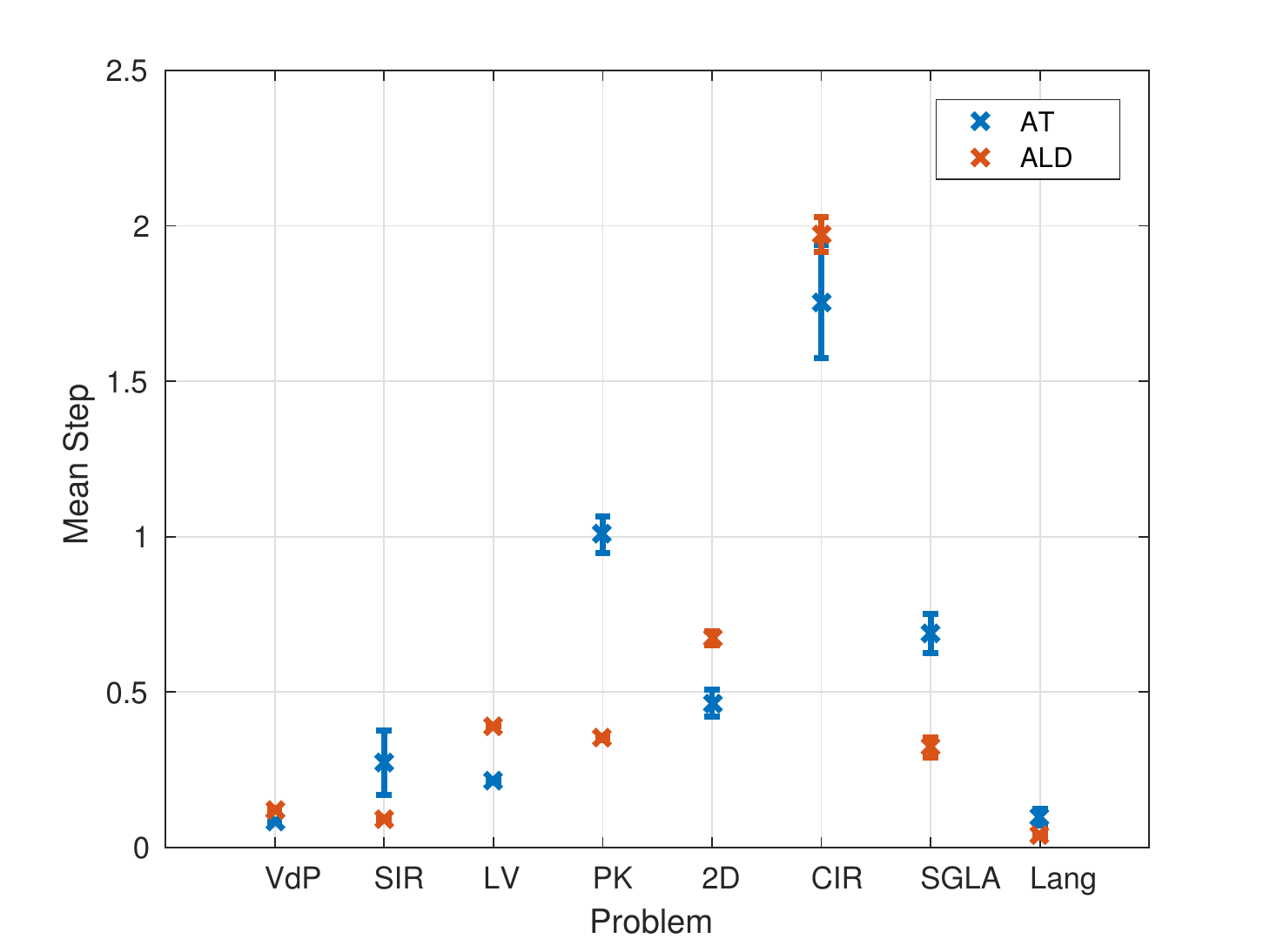}
    \includegraphics[width=0.49\textwidth]{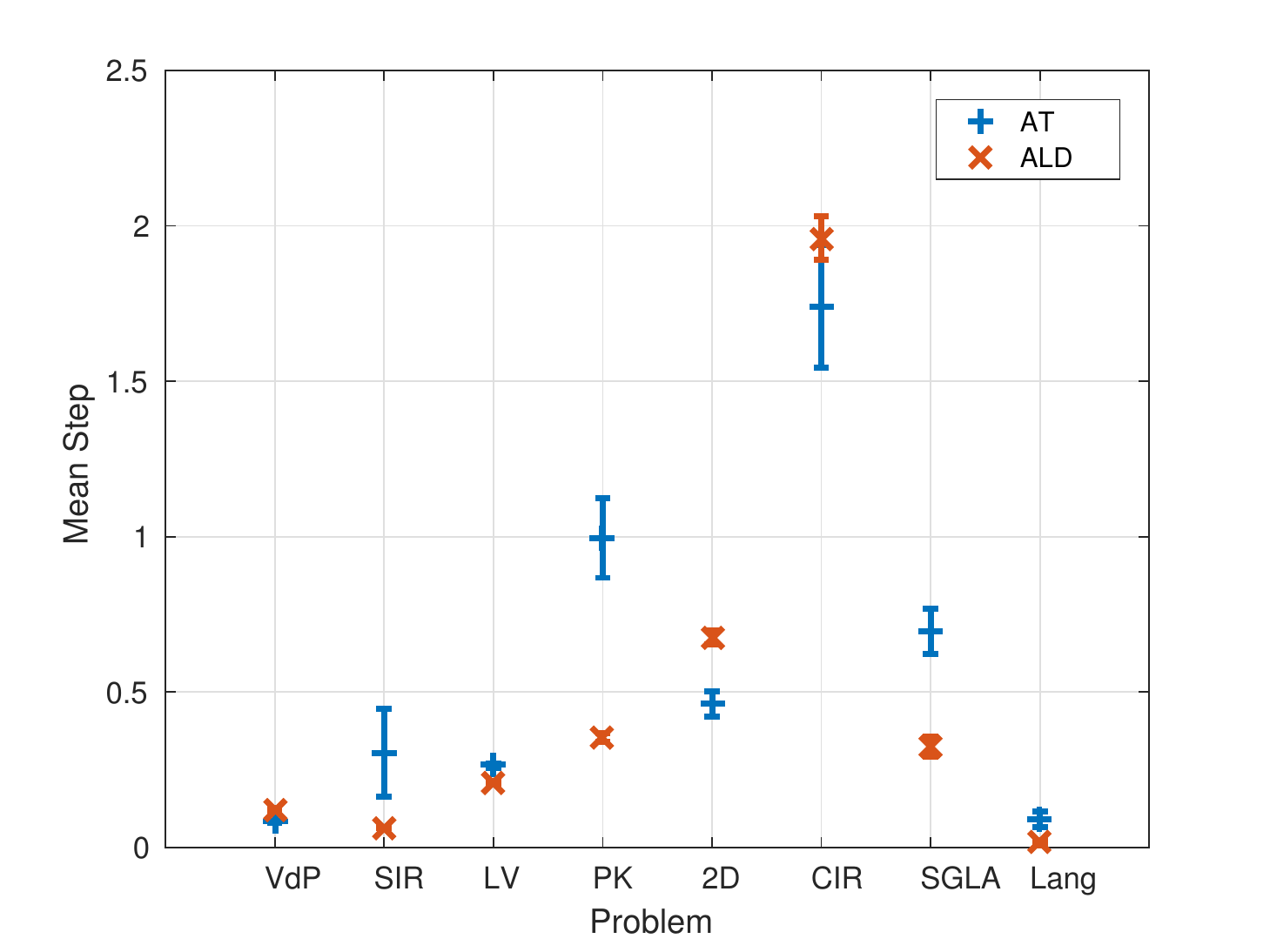}
    \caption{Comparison of mean step sizes $\hmean$ for 8 different problems with $\hmax=1$
      {\bf (a)} $\rho=100$ and {\bf (b)} $\rho=1000$.}
  \end{center}
  \label{fig:8problems}
\end{figure}

\subsection{Stochastic Allen-Cahn SPDE}
To investigate adaptive timestepping for a large system of SDEs we consider the
discretisation of the Allen-Cahn SPDE 
$$du = \left[D u_{xx} + u-u^3 \right] dt + \sigma dW,$$
with $x\in [0,1]$, periodic boundary conditions, and initial data
$u(0,x)=\sin(2\pi x)$. The $Q-$Wiener process $W$ is white in time
and takes values in $H^1_{per}(0,1)$. We take $D=0.01$ and
$\sigma=0.5$ and discretise in space by a spectral Galerkin
approximation \cite{LPS} to get an SDE system in $\mathbb{R}^{100}$.
We take $\hmax=0.05$ and $\rho=100$.
We show in \cref{fig:ACSpde} (a) the $L^2(0,1)$ norm of one sample
realisation as we solve over $t\in[0,10]$ using \EAT and in (b) we
plot the corresponding timestep $h_n$. Note that where the $L^2(0,1)$ of
the solution becomes small in (a), and hence the non-linearity becomes
small, larger steps are taken.
\begin{figure}
  \begin{center}
    (a) 
    \includegraphics[width=1\textwidth,height=0.15\textheight]{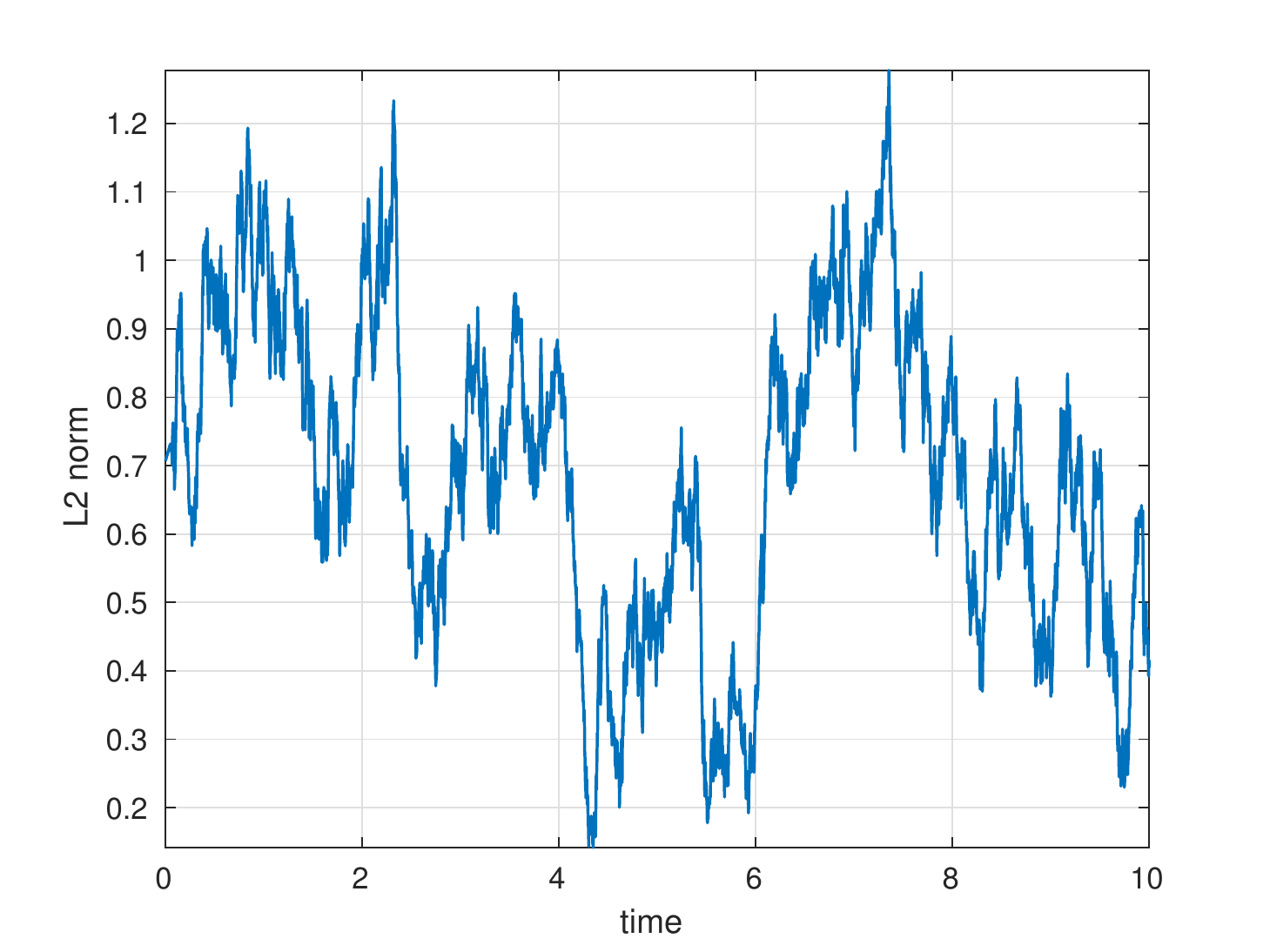}\\
    (b)
    \includegraphics[width=1\textwidth,height=0.15\textheight]{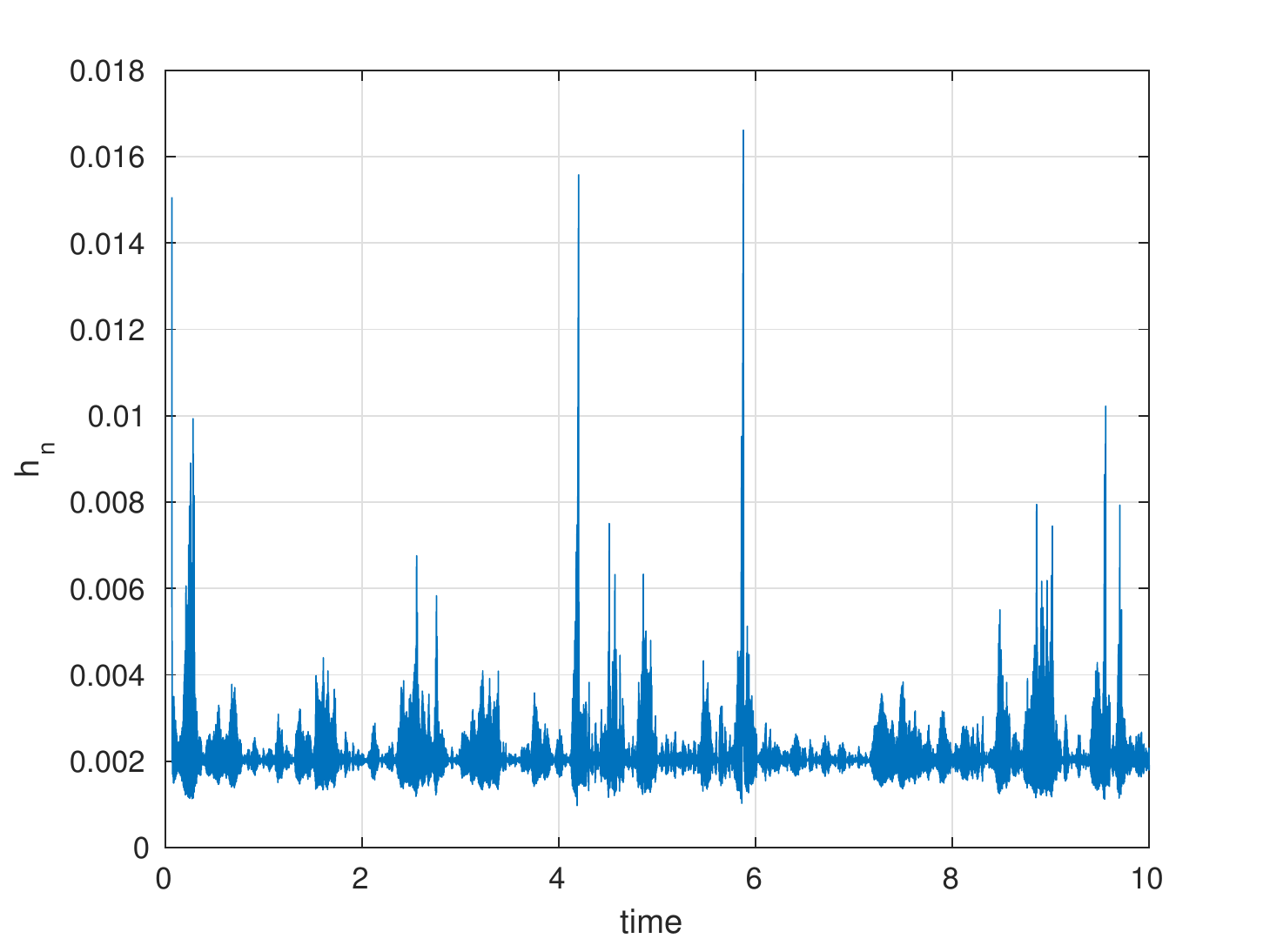}
    \caption{(a) $L^2(0,1)$ norm of a single realisation on the
      stochastic Allen-Cahn SPDE solved using a spectral Galerkin
      system and \EAT. The corresponding timesteps $h_n$ are shown in (b).}
  \end{center}
  \label{fig:ACSpde}
\end{figure}

\subsection{An application to multi-level Monte-Carlo simulation}
One major motivation in \cite{HJ2011} for looking at the
non-convergence of the Euler-Maruyama method was the recent interest
in multi-level Monte-Carlo (MLMC) methods for SDEs, see for example
\cite{Giles15,LPS}.
In its basic form the idea is to use a telescoping sum over different
numerical approximations (levels) as a form of variance reduction.
If we seek to estimate some (Lipschitz) quantity of interest $Q$ of the
solution $X(T)$ to the SDE we can use approximations with a hierarchy
of accuracies from most accurate $L$ to least accurate $0$ and have 
$$\expect{Q(X_{L})} = \expect{Q(X_{L_0})} + \sum_{j=L_0}^{L-1} \expect{Q(X_{j+1})-Q(X_{j})}.$$
We can estimate each expectation on the right hand side with a
different number of realisations determined according to the method described in \cite{Giles15,LPS}, and as $j$ increases we would expect
to take fewer realisations.

We implemented the MLMC method for \EAT and illustrate results below for the
Stochastic Ginzburg-Landau equation with additive noise,
i.e. \eqref{eq:SGLE} with $G(X)=1.$
In our implementation we formed each level by imposing a level
dependent $\hmax^\ell=\hmax^{0}k^{-\ell}$, with $\hmax^0=1$ and
$k=4$. We compare the number of realisations (and hence computational cost) to
those required for the drift-tamed Euler--Maruyama method \eqref{eq:TE}. 
We observe in \cref{fig:mlmcvar} (a) that with the adaptive timestepping 
the variance is reduced at each level compared to taking fixed steps
and hence the number of samples required at leach level is also
reduced (b). This is consistent with other adaptive timestepping
results \cite{HvSST,Giles2017}. 
\begin{figure}
  \begin{center}
    (a) \hspace{0.49\textwidth} (b)
    \includegraphics[width=0.48\textwidth]{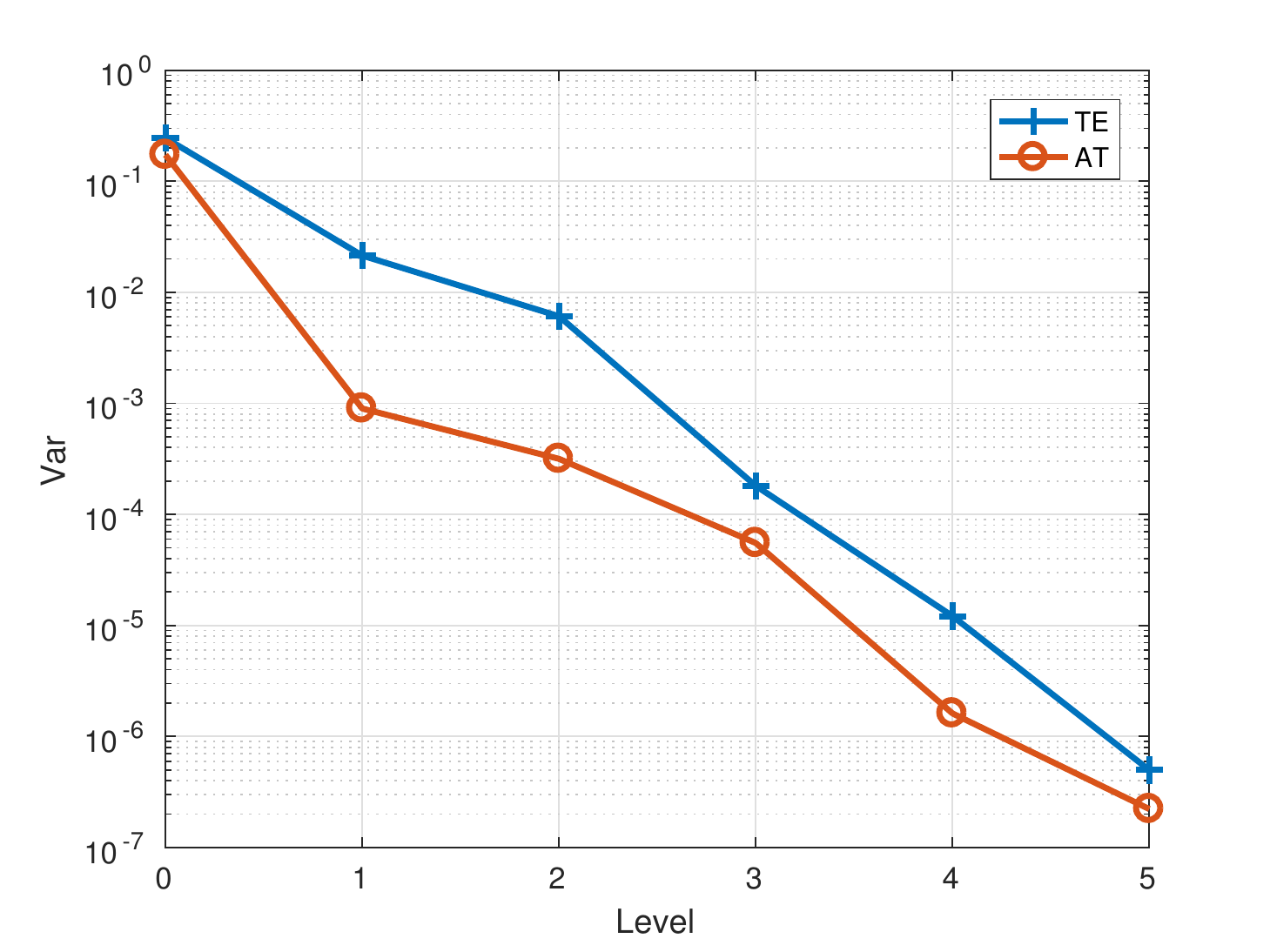} 
    \includegraphics[width=0.48\textwidth]{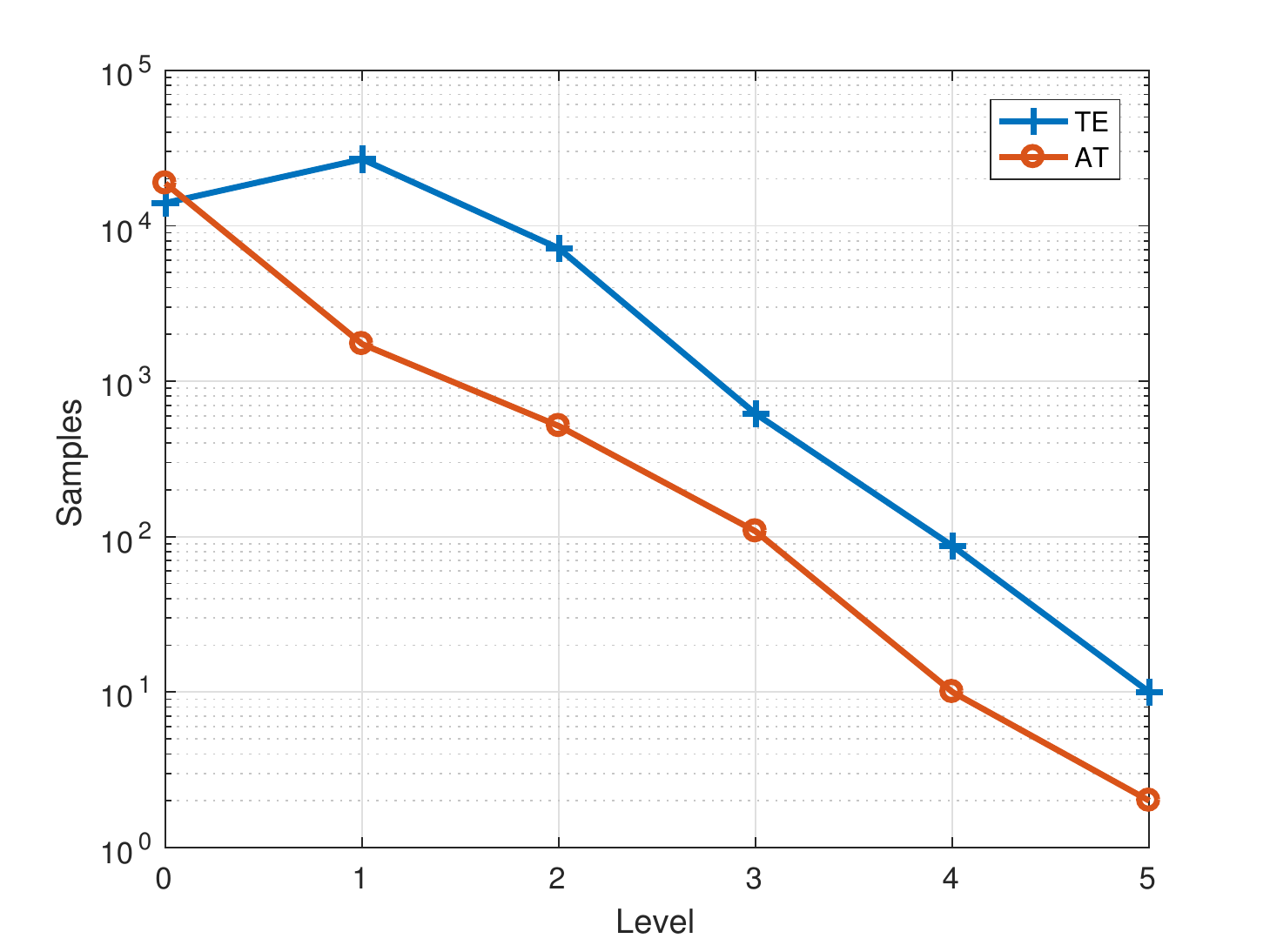} 
  \end{center}
  \caption{Variance against level and number of samples against level
    for \EAT and \eqref{eq:TE} (a). As levels increase the variance
    decreases due to the strong error estimate and hence number of samples
    taken on each level decreases (b). We see that fewer samples are
    required on each level with the adaptive method \EAT.}
  \label{fig:mlmcvar}
\end{figure}

\section{Proof of main result}\label{sec:proofs}
\begin{lemma}
  \label{LPSbook}
Let $(X(t))_{t\in[0,T]}$ be a solution of \eqref{eq:SDE} with initial
value $X(0)=X_0$, and with drift and diffusion coefficients $f$ and $g$
satisfying conditions \eqref{eq:fDer}--\eqref{eq:gLcond}.
Let $\{t_n\}_{n\in\mathbb{N}}$ arise from an adaptive timestepping strategy for \eqref{eq:Scheme} satisfying the conditions of \cref{assum:h}. Consider the Taylor expansions of $f$ and $g$
  \begin{eqnarray*}
    f(X(s))=f(X(t_n))+R_f(s,t_n,X(t_n)),\qquad 
    g(X(s))=g(X(t_n))+R_g(s,t_n,X(t_n)),
  \end{eqnarray*}
  where the remainders $R_f$ and $R_g$ are given in integral form by
  \begin{equation*}
    \label{eq:Rf}
    R_{z}(s,t_n,X(t_n)):= \int_0^1 Dz(X(t_n) + \tau(X(s)-X(t_n))(X(s)-X(t_n)) d\tau,
  \end{equation*}
  and $z$ can be taken to read either $f$ or $g$.
  Then there are a.s. finite and $\mathcal{F}_{t_n}$-measurable random variables $\bar{K}_1,\bar{K}_2>0$, and constants $K_1,K_2,K_3<\infty$, the latter three independent of $h_{n+1}$, such that
\begin{align*}
(i) &\quad
  \expect{\left\|\int_{t_n}^{t_{n+1}}R_{z}(s,t_n,X(t_n))ds\right\|\bigg|\mathcal{F}_{t_n}} \leq \bar{K}_1h_{n+1}^{3/2},\quad a.s.\\
(ii) &\quad
  \expect{\left\|\int_{t_n}^{t_{n+1}}R_{z}(s,t_n,X(t_n))ds\right\|^2\bigg|\mathcal{F}_{t_n}}
 \leq  \bar{K}_2h_{n+1}^2,\quad a.s.\\
(iii) &\quad  \mathbb{E}[\bar{K}_1]\leq K_1, \quad \text{and} \quad\mathbb{E}[\bar{K}_2]\leq K_2.\\
(iv) &\quad
  \expect{\bar{K}_1\left(R_1+2R_2\|X(t_n)\|^2c_1^2\left(1+2\|X(t_n)\|^{c+1}+\|X(t_n)\|^{2(c+1)}\right)\right)}\leq K_3.
\end{align*}
\end{lemma}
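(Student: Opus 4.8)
The plan is to treat the two remainders $R_f$ and $R_g$ in parallel, using the integral representation to reduce everything to conditional moment estimates of the solution increment $X(s)-X(t_n)$ over a single random step. First I would bound the integrand pointwise. From the integral form,
\[
\|R_z(s,t_n,X(t_n))\|\leq \int_0^1 \|Dz((1-\tau)X(t_n)+\tau X(s))\|\,d\tau\;\|X(s)-X(t_n)\|.
\]
For $z=g$ the global Lipschitz condition \eqref{eq:gLcond} gives $\|Dg\|\leq\kappa$, so $\|R_g\|\leq\kappa\|X(s)-X(t_n)\|$; for $z=f$ the polynomial bound \eqref{eq:fDer} together with the convexity estimate $\|(1-\tau)X(t_n)+\tau X(s)\|\leq\|X(t_n)\|+\|X(s)\|$ yields $\|R_f\|\leq c(1+(\|X(t_n)\|+\|X(s)\|)^c)\|X(s)-X(t_n)\|$. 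In both cases the prefactor is a polynomial in the solution norm and the remaining factor is the increment.

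The technical heart is a pair of conditional increment estimates: for $p\in\{2,4\}$ there is an $\mathcal{F}_{t_n}$-measurable random variable $\bar C_p$, dominated by a polynomial in $\|X(t_n)\|$, with
\[
\expect{\|X(s)-X(t_n)\|^p\,\big|\,\mathcal{F}_{t_n}}\leq \bar C_p\,(s-t_n)^{p/2},\qquad s\in[t_n,t_{n+1}].
\]
I would obtain these from the representation $X(s)-X(t_n)=\int_{t_n}^s f(X(u))\,du+\int_{t_n}^s g(X(u))\,dW(u)$, using that $t_n$ is a stopping time and $h_{n+1}$ is $\mathcal{F}_{t_n}$-measurable in order to condition, then applying the Burkholder--Davis--Gundy inequality to the stochastic integral and H\"older to the Lebesgue integral, controlling $\|f\|$ via \eqref{eq:fbound} and $\|g\|$ via the linear growth implied by \eqref{eq:gLcond}. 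Since the step length is at most $\hmax\leq T$, the drift contribution is of higher order in $(s-t_n)$ and the diffusion term supplies the leading $(s-t_n)^{p/2}$; crucially, the constants $\bar C_p$ remain $\mathcal{F}_{t_n}$-measurable and polynomially bounded in $\|X(t_n)\|$ by the conditional moment bounds of the solution.

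With these in hand, parts (i) and (ii) are integrations. For (i) I would move the norm inside the time integral, take the conditional expectation, split the polynomial prefactor from the increment by Cauchy--Schwarz, insert the $p=2$ estimate, and integrate $(s-t_n)^{1/2}$ to produce $\tfrac{2}{3}h_{n+1}^{3/2}$, reading off an $\mathcal{F}_{t_n}$-measurable $\bar K_1$. For (ii) I would first use Cauchy--Schwarz in time to pull out a factor $h_{n+1}$ and reduce to $\expect{\|R_z\|^2\,|\,\mathcal{F}_{t_n}}$, bound this by the $p=4$ estimate (again splitting off the prefactor by Cauchy--Schwarz) so that it is $O(s-t_n)$, integrate to get $h_{n+1}^3$, and absorb the spare factor $h_{n+1}\leq T$ to reach the stated $\bar K_2 h_{n+1}^2$. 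In each case $\bar K_1,\bar K_2$ inherit a fixed-degree polynomial bound in $\|X(t_n)\|$ from the prefactors and the $\bar C_p$.

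Finally, parts (iii) and (iv) are moment bounds on these coefficients. Since $\bar K_1$ and $\bar K_2$ are dominated by polynomials in $\|X(t_n)\|$, taking unconditional expectations and invoking the uniform moment bound \eqref{eq:SDEmoments} of \cref{lem:boundedMomentsSDE} gives finite constants $K_1,K_2$ independent of $h_{n+1}$ and of $n$. For (iv) I would observe that, by \eqref{eq:fbound}, the bracketed expression equals $R_1+2R_2\|X(t_n)\|^2$ times an upper bound for $\|f(X(t_n))\|^2$, hence is again a polynomial in $\|X(t_n)\|$; multiplied by the polynomial $\bar K_1$ it stays polynomial, so its expectation is finite by the same moment bound. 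The hard part will be the conditional increment estimate of the second paragraph: because $f$ is only polynomially bounded, one must verify that the ``constants'' arising there are genuinely $\mathcal{F}_{t_n}$-measurable and polynomially controlled uniformly over the horizon, so that the final constants $K_1,K_2,K_3$ depend on neither $n$ nor $\hmax$.
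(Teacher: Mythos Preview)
Your proposal is correct and follows a route that is genuinely different from the paper's. The key divergence is in how the $\mathcal{F}_{t_n}$-measurable random constants $\bar K_1,\bar K_2$ are constructed. You aim to take them as explicit polynomials in $\|X(t_n)\|$, relying on conditional increment bounds $\expect{\|X(s)-X(t_n)\|^p\mid\mathcal{F}_{t_n}}\leq \bar C_p(s-t_n)^{p/2}$ for $p=2,4$ obtained via BDG and the strong Markov property, with $\bar C_p$ polynomial in $\|X(t_n)\|$. The paper instead \emph{defines} the random constants as conditional expectations of global sup-moments, e.g.\ $\bar L_n=\expect{4c_1^2(1+\sup_{u\in[0,T]}\|X(u)\|^{2c+2})+2\kappa^2\sup_{u\in[0,T]}\|X(u)\|^2\mid\mathcal{F}_{t_n}}$, and sets $\bar K_1=\tfrac{2}{3}\sqrt{\bar M_n\bar L_n}$ with $\bar M_n$ of the same type. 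Parts (iii) and (iv) then fall out almost immediately from the tower property and \cref{lem:boundedMomentsSDE}, and Part (ii) is handled without any $p=4$ increment estimate at all: the paper simply pulls out $h_{n+1}^2$ and bounds the remaining factor by a conditional expectation of $\sup_{u\in[0,T]}\|X(u)\|$-powers.

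What each approach buys: the paper's route is technically lighter, needing only the It\^o isometry (no BDG, no fourth-moment increment bound) and no statement about how the moment constant in \cref{lem:boundedMomentsSDE} depends on the initial condition. Your route gives more explicit, pathwise-polynomial $\bar K_1,\bar K_2$, which is conceptually cleaner, but it requires the additional (standard, though not stated in the paper) fact that $\expect{\|X(u)\|^q\mid\mathcal{F}_{t_n}}$ is bounded polynomially in $\|X(t_n)\|$ uniformly on $[t_n,T]$. You correctly flag this as the hard part; once it is established, your argument goes through.
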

\begin{proof}
Let $t_n$ be a term of $\{t_n\}_{n\in\mathbb{N}}$, and suppose that $t_n<s\leq T$. Then
$$X(s)-X(t_n)=\int_{t_n}^s f(X(r))dr+\int_{t_n}^s g(X(r))dW(r).$$
By the triangle inequality, Jensen's inequality, and the conditional form of the It\^o isometry,
\begin{eqnarray*}
\lefteqn{\expect{\|X(s)-X(t_n)\|^2 | \mathcal{F}_{t_n}}}\\ &\leq& 2 \expect{\left\| \int_{t_n}^s f(X(r)) dr\right\|^2 \bigg| \mathcal{F}_{t_n}}+ 2 \expect{\int_{t_n}^s \| g(X(r))\|^2_F dr \bigg| \mathcal{F}_{t_n}}\\
&\leq& 2  \int_{t_n}^s \expect{\left\|f(X(r))\right\|^2 | \mathcal{F}_{t_n}}dr + 2 \int_{t_n}^s \expect{\| g(X(r))\|^2_F | \mathcal{F}_{t_n}} dr,\quad a.s.
\end{eqnarray*}
Next, we apply \eqref{eq:gLcond} and \eqref{eq:fbound} to get
\begin{eqnarray*}
\lefteqn{\expect{\|X(s)-X(t_n)\|^2 | \mathcal{F}_{t_n}}}\\ 
&\leq& 4  \int_{t_n}^s \expect{c_1^2(1+\|X(r)\|^{2c+2}) | \mathcal{F}_{t_n}}dr + 2\kappa^2 \int_{t_n}^s \expect{\| X(r)\|^2 | \mathcal{F}_{t_n}} dr\\
&\leq& \left(4\expect{c_1^2\left(1+\sup_{u\in[0,T]}\|X(u)\|^{2c+2}\right) \bigg| \mathcal{F}_{t_n}}\right.\\
&&\qquad\qquad\left.+2\kappa^2\expect{\sup_{u\in[0,T]}\| X(u)\|^2 \bigg| \mathcal{F}_{t_n}}\right)|s-t_n|\quad a.s.
\end{eqnarray*}
Therefore, by \eqref{eq:SDEmoments} in the statement of \cref{lem:boundedMomentsSDE}, we can define an a.s. finite and $\mathcal{F}_{t_n}$-measurable random variable 
\begin{equation}\label{eq:Ln}
\bar{L}_{n}:=\left(4\expect{c_1^2\left(1+\sup_{u\in[0,T]}\|X(u)\|^{2c+2}\right) \bigg| \mathcal{F}_{t_n}}+2\kappa^2\expect{\sup_{u\in[0,T]}\| X(u)\|^2 \bigg| \mathcal{F}_{t_n}}\right),
\end{equation}
so that 
\begin{equation}\label{eq:holdBound}
\expect{\|X(s)-X(t_n)\|^2\big|\mathcal{F}_{t_n}}\leq \bar{L}_n|s-t_n|,\quad a.s.
\end{equation}

Now consider Part (i) with $R_f$. By \eqref{eq:fDer}, and the Cauchy-Schwarz inequality
\begin{eqnarray*}
\lefteqn{\mathbb{E}\left[\left\|R_f(s,t_n,X(t_n))\right\|\bigg|\mathcal{F}_{t_n}\right]}\\
&\leq& c_1\mathbb{E}\left[\int_0^1 (1+\|X(t_n) +
\tau(X(s)-X(t_n))\|^c)\|(X(s)-X(t_n))\|d\tau\bigg|\mathcal{F}_{t_n}\right]\\
&\leq&c_1\sqrt{\mathbb{E}\left[\|(X(s)-X(t_n))\|^2 | \mathcal{F}_{t_n}\right]}\\
&&\quad\times\sqrt{\mathbb{E}\left[\int_0^1 (1+\|X(t_n) +
\tau(X(s)-X(t_n))\|^c)^2d\tau\bigg|\mathcal{F}_{t_n}\right]},\quad a.s.
\end{eqnarray*}
By \eqref{eq:SDEmoments} in the statement of \cref{lem:boundedMomentsSDE} we can define an a.s. finite and $\mathcal{F}_{t_n}$-measurable random variable
\[
\bar{M}_n:=\expect{2c_1^2+18c_1^2\sup_{u\in[0,T]}\|X(u)\|^{2c}\bigg|\mathcal{F}_{t_n}}
\]
and so, by \eqref{eq:holdBound}, 
\[
\mathbb{E}\left[\|R_f(s,t_n,X(t_n))\|\bigg|\mathcal{F}_{t_n}\right]\leq \sqrt{\bar{M}_n\bar{L}_n}\sqrt{|s-t_n|},\quad a.s.
\]
Since $t_{n+1}$ is an $\mathcal{F}_{t_{n}}$-measurable random variable, there is an a.s. finite and $\mathcal{F}_{t_n}$-measurable random variable $0<\bar{K}_1:=\frac{2}{3}\sqrt{\bar{M}_n\bar{L}_n}$ such that
\begin{eqnarray*}
\expect{\left\|\intdt R_f(s,t_n,X(t_n))ds\right\|\bigg|\mathcal{F}_{t_n}}&\leq&\int_{t_n}^{t_{n+1}}\mathbb{E}\left[\|R_f(s,t_n,X(t_n))\|\mathcal{F}_{t_n}\right]ds\\
&\leq& \sqrt{\bar{M}_n\bar{L}_n} \intdt \sqrt{|s-t_n|} ds \leq \bar{K}_1 h_{n+1}^{3/2},\quad a.s.
\end{eqnarray*}

For Part (i) with $R_g$, the same approach using the global Lipschitz condition \eqref{eq:gLcond} instead of \eqref{eq:osLcond} yields the result. 

Now consider Part (ii) with $R_f$. We have by \eqref{eq:SDEmoments} and the Cauchy-Schwarz inequality 
\begin{eqnarray*}
\lefteqn{\mathbb{E}\left[\left\|\int_{t_n}^{t_{n+1}}R_f(s,t_n,X(t_n))ds\right\|^2\bigg|\mathcal{F}_{t_n}\right]}\\
&\leq& c\mathbb{E}\left[\left\|\int_{t_n}^{t_{n+1}}\int_0^1 Df((X(t_n) +
\tau(X(s)-X(t_n)))(X(s)-X(t_n))d\tau ds\right\|^2\bigg|\mathcal{F}_{t_n}\right]\\
&\leq&c\mathbb{E}\left[\left\|\int_{t_n}^{t_{n+1}}ds\right\|^2\right.\\
&&\qquad\times\left.\sup_{u\in[t_n,t_{n+1}]}\int_0^1 (1+\|X(t_n) + \tau(X(u)-X(t_n))\|^{c})^2\|X(u)-X(t_n)\|^2d\tau\bigg|\mathcal{F}_{t_n}\right]\\
&\leq&c\sqrt{\mathbb{E}\left[\left\|\int_{t_n}^{t_{n+1}}ds\right\|^4\bigg|\mathcal{F}_{t_n}\right]}\\
&&\qquad\times\sqrt{\expect{\left(\sup_{u\in[0,T]}4\int_0^1 (1+(1+2\tau)^c\|X(u)\|^c)^2\|X(u)\|^2d\tau\right)^2\bigg|\mathcal{F}_{t_n}}}\\
&\leq&\bar{K}_2h_{n+1}^2,\quad a.s.,
\end{eqnarray*}
where $\bar{K}_2$ is the a.s. finite and $\mathcal{F}_{t_n}$-measurable random variable
\[
\bar{K}_2:=\sqrt{128c^2\expect{\sup_{u\in[0,T]}\left(\|X(u)\|^4+3^{4c}\|X(u)\|^{4c+4}\right)\bigg|\mathcal{F}_{t_n}}}.
\]
A similar approach for $R_g$ using the global Lipschitz condition \eqref{eq:gLcond} completes Part (ii). 

Part (iii) follows from the construction of $\bar{K}_1$ and $\bar{K}_2$ as follows. An application of Cauchy Schwarz and \eqref{eq:SDEmoments} in the statement of \cref{lem:boundedMomentsSDE} gives that there exists $K_1<\infty$, independent of $h_{n+1}$, such that
\begin{eqnarray*}
\expect{\bar{K}_1}&=&\expect{\frac{2}{3}\sqrt{\bar{M}_n}\sqrt{\bar{L}_n}}\leq \frac{2}{3}\sqrt{\expect{\bar{M}_n}}\sqrt{\expect{\bar{L}_n}}=: K_1.
\end{eqnarray*}
A similar argument using Jensen's inequality shows that there exists $K_2<\infty$, independent of $h_{n+1}$, such that $\expect{\bar{K}_2}\leq K_2$.

Finally, for Part (iv), define the a.s. finite and $\mathcal{F}_{t_n}$-measurable random variable
\[
P(\|X(t_n)\|):=R_1+2R_2\|X(t_n)\|^2c_1^2\left(1+2\|X(t_n)\|^{c+1}+\|X(t_n)\|^{2(c+1)}\right).
\]
Then, by Cauchy-Schwarz and \eqref{eq:SDEmoments} in the statement of \cref{lem:boundedMomentsSDE}, we have that there exists $K_3<\infty$, independent of $h_{n+1}$, such that
\[
\expect{\bar{K}_1P(\|X(t_n))\|}=\expect{\sqrt{\bar{M}_n}\sqrt{\bar{L}_nP(\|X(t_n)\|)^2}}\leq
\expect{\sqrt{\bar{M}_n}\sqrt{\bar{L}_n^{\ast}}}=K_3,
\]
where, by \eqref{eq:Ln}, we have
\begin{multline*}
\bar{L}_n^{\ast}=4\expect{c_1^2\sup_{u\in[0,T]}\left(\left(1+\|X(u)\|^{2c+2}\right)P(\|X(u)\|)^2\right) \bigg| \mathcal{F}_{t_n}}\\+2\kappa^2\expect{\sup_{u\in[0,T]}\left(\| X(u)\|^2P(\|X(u)\|)^2\right) \bigg| \mathcal{F}_{t_n}}.
\end{multline*}
This completes the proof.
\end{proof}

\begin{proof}[Proof of \cref{thm:adaptConv}]
By \cref{thm:tamedConv} it is sufficient to consider only the event that $h_{\text{min}}<h_{n}<h_{\text{max}}$ for all $n=0,\ldots,N-1$. Define the error sequence $\{E_n\}_{n\in\mathbb{N}}$ by
\begin{eqnarray*}
E_{n+1}&:=&Y_{n+1}-X(t_{n+1})\\
&=&Y_n-X(t_n)+\int_{t_n}^{t_{n+1}}[f(Y_n)-f(X(s))]ds+\int_{t_n}^{t_{n+1}}[g(Y_n)-g(X(s))]dW(s).
\end{eqnarray*}
Expand $f$ and $g$ as Taylor series around $X(t_n)$ over the interval of
integration. As in \cref{LPSbook} we get 
\begin{multline*}
  E_{n+1}=E_n+\int_{t_n}^{t_{n+1}}[f(Y_n)-f(X(t_n))]ds+\int_{t_n}^{t_{n+1}}[g(Y_n)-g(X(t_n))]dW(s)\\
  +\underbrace{\int_{t_n}^{t_{n+1}}R_f(s,t_n,X(t_n))ds}_{:=\tilde{R}_f(t_n,X(t_n))}+\underbrace{\int_{t_n}^{t_{n+1}}R_g(s,t_n,X(t_n))dW(s)}_{:=\tilde{R}_g(t_n,X(t_n))}
\end{multline*}
which may be rewritten using the notation $\triangle W_{n+1}=W(t_{n+1})-W(t_n)$ as
\begin{multline*}
E_{n+1}=E_n+h_{n+1}[f(Y_n)-f(X(t_n))]+\triangle W_{n+1}[g(Y_n)-g(X(t_n))]\\+\tilde{R}_f(t_n,X(t_n))+\tilde{R}_g(t_n,X(t_n)).
\end{multline*}
Next we develop appropriate bounds on
\begin{equation}\label{eq:condExp}
  \mathbb{E}\left[\|E_{n+1}\|^2\big|\mathcal{F}_{t_n}\right]=\mathbb{E}\left[\langle E_{n+1},E_{n+1}\rangle\big|\mathcal{F}_{t_n}\right].
\end{equation}
Note that
\begin{eqnarray*}
\lefteqn{\|E_{n+1}\|^2=\langle E_n,E_{n+1}\rangle +\underbrace{h_{n+1}\langle f(Y_n)-f(X(t_n)),E_{n+1}\rangle}_{:=A_n}}&&\\
&& +\underbrace{\langle \triangle W_{n+1}[g(Y_n)-g(X(t_n))],E_{n+1}\rangle}_{:=B_n}+\underbrace{\langle \tilde{R}_f(t_n,X(t_n))+\tilde{R}_g(t_n,X(t_n)),E_{n+1}\rangle}_{C_n}.
\end{eqnarray*}
Then, since $\langle E_n,E_{n+1}\rangle\leq\frac{1}{2}(\|E_n\|^2+\|E_{n+1}\|^2)$, we have $
\|E_{n+1}\|^2=\|E_n\|^2+2A_n+2B_n+2C_n$. Next we omit the arguments from $\tilde{R}_f$, $\tilde{R}_g$ and write
\begin{eqnarray*}
A_n&=&h_{n+1}\langle f(Y_n)-f(X(t_n)),E_n\rangle+h_{n+1}^2\|f(Y_n)-f(X(t_n))\|^2\\
&&+h_{n+1}\langle f(Y_n)-f(X(t_n)),\triangle W_{n+1}[g(Y_n)-g(X(t_n))]\rangle\\
&&+h_{n+1}\langle f(Y_n)-f(X(t_n)),\tilde{R}_f+\tilde{R}_g\rangle;\\
B_n&=&\langle \triangle W_{n+1}[g(Y_n)-g(X(t_n))],E_n\rangle+\|\triangle W_{n+1}[g(Y_n)-g(X(t_n))]\|^2\\
&&+h_{n+1}\langle \triangle W_{n+1}[g(Y_n)-g(X(t_n))],f(Y_n)-f(X(t_n))\rangle\\
&&+\langle \triangle W_{n+1}[g(Y_n)-g(X(t_n))],\tilde{R}_f+\tilde{R}_g\rangle;\\
C_n&=&\langle \tilde{R}_f+\tilde{R}_g, E_n\rangle+\|\tilde{R}_f+\tilde{R}_g\|^2\\
&&+h_{n+1}\langle \tilde{R}_f+\tilde{R}_g,f(Y_n)-f(X(t_n))\rangle\\
&&+\langle \triangle W_{n+1}[g(Y_n)-g(X(t_n))],\tilde{R}_f+\tilde{R}_g\rangle.
\end{eqnarray*}
By \cref{rem:conditionalMoments}, and applying the Lipschitz bounds \eqref{eq:osLcond} and \eqref{eq:gLcond}, we may now estimate \eqref{eq:condExp} as
\begin{eqnarray*}
\lefteqn{\mathbb{E}\left[\|E_{n+1}\|^2\big|\mathcal{F}_{t_n}\right]}\\
&\leq&\|E_{n}\|^2+h_{n+1}(2\alpha+2\kappa^2)\|E_{n}\|^2+2h_{n+1}^2\|f(Y_n)-f(X(t_n))\|^2\\
&&+\underbrace{4h_{n+1}\mathbb{E}\left[\langle f(Y_n)-f(X(t_n)),\tilde{R}_f+\tilde{R}_g\rangle\big|\mathcal{F}_{t_n}\right]}_{:=\bar{A}_n}\\
&&+\underbrace{4\mathbb{E}\left[\langle \triangle W_{n+1}[g(Y_n)-g(X(t_n))],\tilde{R}_f+\tilde{R}_g\rangle\big|\mathcal{F}_{t_n}\right]}_{:=\bar{B}_n}\\
&&+\underbrace{2\mathbb{E}\left[\langle \tilde{R}_f+\tilde{R}_g,E_n\rangle\big|\mathcal{F}_{t_n}\right]}_{:=\bar{C}_n}+\underbrace{2\mathbb{E}\left[\|\tilde{R}_f+\tilde{R}_g\|^2\big|\mathcal{F}_{t_n}\right]}_{:=\bar{D}_n},\quad a.s.
\end{eqnarray*}
Since $\{t_n\}_{n\in\mathbb{N}}$ arises from an admissible timestepping strategy, we can use \eqref{eq:normfBound} along with the bound on \eqref{eq:fbound} on $f$ to get
\begin{eqnarray}
\lefteqn{h_{n+1}^2\|f(Y_n)-f(X(t_n))\|^2}\nonumber\\&\leq& 2h_{n+1}^2\|f(Y_n)\|^2+2h_{n+1}^2\|f(X(t_n))\|^2\nonumber\\
&\leq&2h_{n+1}^2R_2\|Y_n\|^2+2h_{n+1}^2R_1+2h_{n+1}^2\|f(X(t_n))\|^2\nonumber\\
&\leq&4h_{n+1}^2R_2\|E_n\|^2+4h_{n+1}^2R_2\|X(t_n)\|^2+2h_{n+1}^2R_1+2h_{n+1}^2\|f(X(t_n))\|^2\nonumber\\
&\leq&4h_{n+1}^2R_2\|E_n\|^2+2h_{n+1}^2R_1\nonumber\\
&&+4h_{n+1}^2R_2\|X(t_n)\|^2+2c_1^2h_{n+1}^2\left(1+2\|X(t_n)\|^{c+1}+\|X(t_n)\|^{2(c+1)}\right).\label{eq:barAbound}
\end{eqnarray}
Now we can write
\begin{multline}\label{eq:beforeSumming}
\mathbb{E}\left[\|E_{n+1}\|^2\big|\mathcal{F}_{t_n}\right]-\|E_n\|^2\leq h_{n+1}(2\alpha+2\kappa^2+8R_2)\|E_{n}\|^2\\
+4h_{n+1}^2\left[R_1+2R_2\|X(t_n)\|^2+c_1^2\left(1+2\|X(t_n)\|^{c+1}+\|X(t_n)\|^{2(c+1)}\right)\right]\\
+\bar{A}_n+\bar{B}_n+\bar{C}_n+\bar{D}_n,\quad a.s.
\end{multline}
Next we must consider the terms $\bar{A}_n$, $\bar{B}_n$, $\bar{C}_n$, and $\bar{D}_n$. 
After an application of the triangle inequality, it immediately
follows from Part (ii) of \cref{LPSbook} that 
\[
\bar{D}_n=2\mathbb{E}\left[\|\tilde{R}_f+\tilde{R}_g\|^2\big|\mathcal{F}_{t_n}\right]\leq 8\bar{K}_2h_{n+1}^2,\quad a.s.
\]
This estimate, along with the conditional second moment of $\triangle W_{n+1}$ provided in \cref{rem:conditionalMoments}, and additionally applying two variants of the Cauchy-Schwarz inequality, first to the inner product and second to the conditional expectation, gives us
\begin{eqnarray*}
\bar{B}_n&=&4\mathbb{E}\left[\langle \triangle W_{n+1}[g(Y_n)-g(X(t_n))],\tilde{R}_f+\tilde{R}_g\rangle\big|\mathcal{F}_{t_n}\right]\\
&\leq&\kappa\|E_n\|\mathbb{E}\left[\langle\triangle W_{n+1},\tilde{R}_f+\tilde{R}_g\rangle\big|\mathcal{F}_{t_n}\right]\\
&\leq&\kappa\|E_n\|\mathbb{E}\left[\|\triangle W_{n+1}\|\|\tilde{R}_f+\tilde{R}_g\||\mathcal{F}_{t_n}\right]\\
&\leq&\kappa\|E_n\|\sqrt{\expect{\|\triangle W_{n+1}\|^2\big|\mathcal{F}_{t_n}}}\sqrt{\expect{\|\tilde{R}_f+\tilde{R}_g\|^2\big|\mathcal{F}_{t_n}}}\\
&\leq&2\kappa\sqrt{\bar{K}_2}\|E_n\|h_{n+1}^{3/2}\\
&\leq&\frac{1}{2}\|E_n\|^2h_{n+1}+\kappa^2\bar{K}_2h_{n+1}^2,\quad a.s.
\end{eqnarray*}
Part (i) of \cref{LPSbook} yields
\begin{eqnarray*}
\bar{C}_n&=&2\mathbb{E}\left[\langle \tilde{R}_f+\tilde{R}_g,E_n\rangle\big|\mathcal{F}_{t_n}\right]\\
&\leq&2\expect{\|\tilde{R}_f\|\|E_n\|\big|\mathcal{F}_{t_n}}+2\expect{\|\tilde{R}_g\|\|E_n\|\big|\mathcal{F}_{t_n}}\\
&\leq&4\bar{K}_1\|E_n\|h_{n+1}^{3/2}\leq 2\bar{K}_1h_{n+1}\|E_n\|^2+2\bar{K}_1h_{n+1}^2,\quad a.s.
\end{eqnarray*}
Finally,
\begin{eqnarray*}
\bar{A}_n&=&4h_{n+1}\mathbb{E}\left[\langle f(Y_n)-f(X(t_n)),\tilde{R}_f+\tilde{R}_g\rangle\bigg|\mathcal{F}_{t_n}\right]\\
&=&4h_{n+1}\mathbb{E}\left[\langle f(Y_n)-f(X(t_n)),\tilde{R}_f\rangle\bigg|\mathcal{F}_{t_n}\right]+4h_{n+1}\mathbb{E}\left[\langle f(Y_n)-f(X(t_n)),\tilde{R}_g\rangle\bigg|\mathcal{F}_{t_n}\right].
\end{eqnarray*}
Moreover we have
\begin{eqnarray*}
\mathbb{E}\left[\langle f(Y_n)-f(X(t_n)),\tilde{R}_f\rangle\bigg|\mathcal{F}_{t_n}\right]&\leq&\mathbb{E}\left[\|f(Y_n)-f(X(t_n))\|\|\tilde{R}_f\|\bigg|\mathcal{F}_{t_n}\right]\\
&=&\|f(Y_n)-f(X(t_n))\|\expect{\|\tilde{R}_f\||\mathcal{F}_{t_n}}\quad a.s.
\end{eqnarray*}
A similar bound holds for $\mathbb{E}\left[\langle f(Y_n)-f(X(t_n)),\tilde{R}_g\rangle\bigg|\mathcal{F}_{t_n}\right]$, and therefore,  by Part (i) of \cref{LPSbook},
\begin{eqnarray*}
\bar{A}_n&\leq& 8\|f(Y_n)-f(X(t_n))\|\bar{K}_1h_{n+1}^{5/2}\nonumber\\
&\leq&4\bar{K}_1h_{n+1}^2+4\bar{K}_1\|f(Y_n)-f(X(t_n))\|^2h_{n+1}^3,\quad a.s.\label{eq:barAupper}
\end{eqnarray*}
Applying these bounds to \eqref{eq:beforeSumming}, along with \eqref{eq:barAbound} and noting that $h_{\text{max}}\leq 1$, yields
\begin{multline}\label{eq:beforeSumming2}
\mathbb{E}\left[\|E_{n+1}\|^2\big|\mathcal{F}_{t_n}\right]-\|E_n\|^2\leq h_{n+1}\Gamma_2\|E_{n}\|^2\\+\left[6\bar{K}_1+(8+\kappa^2)\bar{K}_2+4(1+\bar{K}_1)\left[R_1+2R_2\|X(t_n)\|^2\right.\right.\\
\left.\left.+c_1^2\left(1+2\|X(t_n)\|^{c+1}+\|X(t_n)\|^{2(c+1)}\right)\right]\right]h_{n+1}^2,\quad a.s.
\end{multline}
where, recalling $K_1$ as defined in Part (iii) of the statement of \cref{LPSbook}, 
\[
\Gamma_2:=2(\alpha+\kappa^2)+1/2+2K_1+24R_2.
\]
Summing both sides of \eqref{eq:beforeSumming2} over $n$ from $0$ to $N-1$ and taking expectations yields
\begin{equation}\label{eq:preGronwall}
\mathbb{E}[\|E_N\|^2]\leq \Gamma_1T h_{\text{max}}+ \Gamma_2h_{\text{max}}\sum_{n=0}^{N-1}\mathbb{E}[\|E_{n}\|^2],
\end{equation}
where, recalling $K_2$ and $K_3$ as defined in Parts (iii) and (iv) of
the statement of \cref{LPSbook}, we have defined the constant $\Gamma_1$ as
\begin{multline*}
\Gamma_1:=6K_1+(8+\kappa^2)K_2+4\left[R_1+2R_2\expect{\sup_{u\in[0,T]}\|X(u)\|^2}\right.\\
\left.+c_1^2\left(1+2\mathbb{E}\left[\sup_{u\in[0,T]}\|X(u)\|^{c+1}\right]+\mathbb{E}\left[\sup_{u\in[0,T]}\|X(u)\|^{2(c+1)}\right]\right)\right]+4K_3.
\end{multline*}

The discrete Gronwall inequality (see for example \cite{StuartHumpries}), \eqref{eq:hRatio}, and the fact that $Nh_{\text{min}}\leq T$, may now be applied to \eqref{eq:preGronwall}:
\begin{eqnarray*}
\mathbb{E}[\|E_N\|^2]&\leq&  \Gamma_1Th_{\text{max}}\exp\left\{Nh_{\text{max}}\Gamma_2\right\}\\
&=&   \Gamma_1Th_{\text{max}}\exp\left\{\rho Nh_{\text{min}}\Gamma_2\right\}\\
&\leq & \Gamma_1Th_{\text{max}}\exp\left\{\rho T\Gamma_2\right\},
\end{eqnarray*}
which gives the statement of the theorem.
\end{proof}

\section{Conclusions and future work}\label{sec:concl}
We introduced a class of adaptive timestepping strategies for
SDEs with non-Lipschitz drift coefficients and proved strong convergence without the need to prove additional moment bounds on the numerical method. 

Our numerical results on the stochastic Van der Pol equation indicate
that adaptive timestepping may lead to dynamically more accurate
solutions than those from a fixed step tamed scheme where the drift is
perturbed, this was not noly true for the two adaptive schemes we
presentde here. From the suite of problems we examined the method
\EALD seems to lead to a smaller variance in the timestep selection. 
We also saw from the numerical experiments that the parameter $\rho$
required in the analysis is not a restriction. 
Adaptive timestepping strategies are readily applicable to large scale systems, such as
the Allen-Cahn SPDE. We also saw that when applied in a MLMC context
adaptivity can lead to more efficient computation. It has already been noted by a number of
authors \cite{Sotiropoulos2008,LMS2006,beskos2013} that adaptivity
maybe useful for Langevin sampling dynamics and our analysis offers 
techniques suitable for equations with non-Lipschitz drift terms that
may arise, for example, in image processing. We also note that this approach could be combined with error control timestepping strategies. 

Possible future work includes extending the analysis to include to
SDEs with non-Lipschitz diffusion coefficients, to SDEs with L\'evy
noise, to SPDEs and to other forms of explicit methods.

\section{Acknowledgement}
The authors thank Prof. Alexandra Rodkina (UWI) for her feedback on an
earlier draft.

\bibliographystyle{plain}
\bibliography{bibliog}

\end{document}